\documentclass[a4paper]{article}
\usepackage{mystyle}

\begin{document}

\maketitle

\begin{abstract}
  Reduced basis approximations of Optimal Control Problems (OCPs) governed by steady partial differential equations (PDEs) with random parametric inputs are analyzed and constructed. Such approximations are based on a Reduced Order Model, which in this work is constructed using the method of weighted Proper Orthogonal Decomposition. This Reduced Order Model then is used to efficiently compute the reduced basis approximation for any outcome of the random parameter. We demonstrate that such OCPs are well-posed by applying the adjoint approach, which also works in the presence of admissibility constraints and in the case of non linear-quadratic OCPs, and thus is more general than the conventional Lagrangian approach. We also show that a step in the construction of these Reduced Order Models, known as the aggregation step, is not fundamental and can in principle be skipped for noncoercive problems, leading to a cheaper online phase. Numerical applications in three scenarios from environmental science are considered, in which the governing PDE is steady and the control is distributed. Various parameter distributions are taken, and several implementations of the weighted Proper Orthogonal Decomposition are compared by choosing different quadrature rules.
\end{abstract}
 
\section{Introduction}

The search for a solution to a PDE-constrained optimization problem is in practice \gcok{affected} by unavoidable uncertainties, \reviewerB{associated with} measurements of parameters involved in the optimization problem. In such contexts of Uncertainty Quantification, one models these parameters as random variables. If the solution is a measurable function of the \reviewerB{parameters}, then it is a random variable itself, and one can study statistics that depend on the solution. For example, one may be interested in a moment of the solution or of a measurable function of the solution. In order to estimate such moments, a Monte Carlo type of estimator could be constructed, which takes the average over a large amount of solutions of the optimization problem, each corresponding to some outcome of the random parameter. When these solutions are not explicitly available, they can be approximated with accurate yet computationally expensive methods, such as Finite Element based methods. Consequently, the use of these so-called truth approximations in the construction of the estimator \gcok{can lead to significant computational costs}. To accommodate this issue, a Reduced Order Model (ROM) can be used to accelerate the approximation process, by providing a cheaply computable surrogate of the expensive truth approximation for any given parameter value, called a reduced \gcok{basis} approximation. The interested reader may refer to \gcok{\cite{rozza2016, prudhomme01, huynh13, rozza2008}} for a survey on ROM techniques and to \cite{bader2017, dede2010reduced, karcher2018certified, negri15, negri13} for their application to parametrized Optimal Control Problems, to \gcok{\cite{torlo18, venturi19, venturitorlo19}} for their application to UQ and to \cite{chen17} for the application to OCPs in UQ.
\newline
Given a parametric measurement $\mu\in\R^n$ for $n\in\N$, in the general formulation of an Optimal Control Problem (OCP) parametrized by $\mu$, one is to minimize a convex functional $J(\cdot,\cdot;\mu):Y\times U\rightarrow\R$ over all state-control pairs $(y,u)\in Y\times U$ that satisfy the governing PDE-state equation $e(y,u;\mu)=0$. Here, $Y$ and $U$ are the real Hilbert spaces of state and control, respectively, and $e(\cdot,\cdot;\mu):Y\times U\rightarrow Q$ with $Q$ some real Hilbert space. The minimizer $(y(\mu),u(\mu))$ is often constrained to lie in a closed and convex set of admissible pairs $W_{ad}$ of $Y\times U$. Our focus, however, mainly lies on OCPs with $W_{ad}=Y\times U$, as it allows for the use of a ROM, and with $J(\cdot,\cdot;\mu)$ of quadratic form.
\newline
It is well-known (\gcok{see e.g.} \cite{lions71}), that the minimizer $(y(\mu),u(\mu))$ can be obtained by finding the solution $\left( y(\mu),u(\mu),p(\mu) \right)\in Y\times U\times Q^*$ to some system of three equations, as shall also be recalled in this work. The variable $p(\mu)$ here is called the adjoint solution.
To incorporate uncertainty in the parametric measurement $\mu$, we interpret $\mu$ as the outcome of a random variable that takes values in some subset $\m\subset\R^n$. For any outcome $\mu\in\m$, the solution $(y(\mu),u(\mu),p(\mu))$ is in general not explicitly available, and one could construct the computationally expensive truth solution $(y^\nd(\mu),u^\nd(\mu),p^\nd(\mu))$ to approximate it. As indicated above, the aim is to solve the OCP for a large number of possible outcomes of this random variable, so that, for example, sample averages can be computed. Not only would a naive parameter wise computation of the truth approximation become computationally infeasible, it would also completely ignore any possible low-dimensional behaviour of the (discrete) solution manifold
\[
  \M^\nd:=\left\{ (y^\nd(\mu),u^\nd(\mu),p^\nd(\mu)),\ \mu\in\m \right\}\subset Y\times U\times Q^*.
\]
Indeed, the Kolmogorov $\nr$-width
\[
  \inf_{\substack{V\subset\Span{\M^\nd}\\\dim{V}=\nr}}\esssup_{\mu\in\m}\norm{(y^\nd(\mu),u^\nd(\mu),p^\nd(\mu))-P_V(y^\nd(\mu),u^\nd(\mu),p^\nd(\mu))}_{Y\times U\times Q^*}
\]
where $P_V$ denotes the orthogonal projector onto $V$, very often decays rapidly in $\nr$. 
A ROM aims to exploit this by constructing a low dimensional subspace of $\Span{\M^\nd}$, in a computationally expensive so-called offline phase, onto which for any given outcome $\mu\in\m$ the solution $\left( y^\nd(\mu),u^\nd(\mu),p^\nd(\mu) \right)$ then can be projected. This projection is performed in the online phase, which under certain parameter separability conditions can be executed rapidly, and results in a reduced \gcok{basis} approximation $(y_\nr(\mu),u_\nr(\mu),p_\nr(\mu))$ of \gcok{the function} $(y(\mu),u(\mu),p(\mu))$. The low dimensional subspace should be constructed in some optimal way.  In this work, we do this by means of the weighted Proper Orthogonal Decomposition algorithm, already proposed in \gcok{\cite{venturi19}}. This algorithm is a combination of a singular value decomposition and a quadrature rule. \gcok{See also \cite{ballarin15, burkardt06, chapelle13}.}
\newline
Clearly, before one can construct truth approximations and reduced \gcok{basis} approximations of the control problem corresponding to an outcome $\mu\in\m$, it must be guaranteed that the OCP is well-posed for that $\mu$. One way of doing this, is by using Brezzi's Theorem to search for saddle points of the Lagrangian corresponding to the OCP, as in \cite{negri15, negri13, maria17}. We shall present a second approach, also known as the adjoint approach or the Lions approach, based on the argumentation of \cite[Chapters~I, II]{lions71} and \cite[Chapter~I]{hinze09}. It allows one to recover the conclusions of the Lagrangian approach, but in a more general form, as it does not require $e(\cdot,\cdot;\mu)$ to be linear, $J(\cdot,\cdot;\mu)$ to be quadratic and $W_{ad}$ to be $Y\times U$.
\newline
The work is outlined as follows. First of all, we formally define the formulation of our OCP in an Uncertainty Quantification context in Section \ref{sec:problem_formulation}. We then briefly recall the main arguments of the adjoint approach in Section \ref{sec:ocp}, where the discussion is generalized from Hilbert to Banach spaces. For convenience, all linear spaces are assumed to be real.
\newline
The well-posedness of the truth approximations and reduced \gcok{basis} approximations is derived with similar argumentation as for the original OCP, as argued in Sections \ref{sec:truth_approximation} and \ref{sec:rom}, respectively. For the reduced formulation, recent developments of ROMs for parametrized OCPs have made use of so-called aggregate spaces to ensure that the reduced formulation is well-posed, \cite{bader2017, dede2010reduced, karcher2018certified, negri15, negri13}. We shall argue that this result can be improved by showing that the aggregation procedure is actually redundant from a theoretical point of view, and only useful in the specific case of coercive governing PDEs. This leads to an additional acceleration in the performance of the ROM, as the low dimensional space constructed in the offline phase can be taken even smaller. Section \ref{sec:rom} also recalls the basic ideas of ROMs and the weighted Proper Orthogonal Decomposition.
\newline
Finally, in Section \ref{sec:applications}, we reconstruct the three applications of \cite{maria17} in marine sciences. Parametric uncertainties are inherent to such real world applications, and it therefore is important to embed them in the Uncertainty Quantification context. We apply the weighted Proper Orthogonal Decomposition method with different choices of the quadrature rule to construct ROMs that accurately and efficiently approximate the OCP for arbitrary draws of the random parameter. We shall also consider several compactly supported probability distributions on $\m$ from which these draws are taken. The first numerical application involves a coercive PDE in weak formulation. The other two involve noncoercive (weakly coercive) PDEs, and for these we compare the performance of ROMs obtained with and without the aggregation procedure. Conclusions follow in Section \ref{sec:conclusions}.
\gcok{
\newline
The main novelty of the work lies in the derivation of well-posedness of ROMs for OCPs via the adjoint approach including their associated validity on Banach spaces, nonreflexive state spaces, and non-aggregated spaces, as well as the numerical application of these ROMs to existing OCP models embedded in a UQ context.
}

\section{Problem formulation}
\label{sec:problem_formulation}
Let $(\Theta,\F,\P)$ be a probability space and $\m$ be a compact subset of $\R^n$ for $n\in\N$. Let also $\bm{\mu}:(\Theta,\F,\P)\rightarrow \m$ be a random variable with respect to the Borel $\sigma$-algebra on $\m$. We denote its law, i.e. the push forward measure of $\P$ under $\bm{\mu}$, by $\P^{\bm{\mu}}$. Furthermore, we let the Hilbert spaces $Y$ and $U$ be the \textit{state space} and \textit{control space} respectively and let $J(\cdot,\cdot;\cdot):Y\times U\times\m\rightarrow\R$ denote the \textit{objective functional}. Finally, let $e(\cdot,\cdot;\cdot):Y\times U\times \m\rightarrow Q$ be the \textit{state equation function} for some Hilbert space $Q$. We are interested in solving the following parametrized OCP:
 \begin{problem}
   \gcok{Let us find} $(y(\bm{\mu}),u(\bm{\mu})):\Theta\rightarrow Y\times U$ such that it $\P$-almost everywhere holds that
   \begin{itemize}
     \item $e(y(\bm{\mu}),u(\bm{\mu});\bm{\mu})=0$,
     \item $(y(\bm{\mu}),u(\bm{\mu}))\in W_{ad}\subset Y\times U$,
     \item $J(y(\bm{\mu}),u(\bm{\mu})\reviewerB{;\bm{\mu}})=\min_{(\tilde{y},\tilde{u})\in Y\times U}J(\tilde{y},\tilde{u};\bm{\mu})$
   \end{itemize}
  \label{prob:general_objective}
\end{problem}
If $\P^{\bm{\mu}}$ corresponds to the uniform distribution on $\m$, then we say that the parametrized OCP is \textit{deterministic}.
We are primarily interested in the following particular kind of parametrized OCP. By $\B(H_1, H_2)$ we denote the \gcok{space of bounded linear} operators between the normed spaces $H_1$ and $H_2$. 

\begin{definition}
  Let $Z$ be a Hilbert space, the so-called \textit{observation space}, $z_d(\bm{\mu}):\Theta\rightarrow Z$ a \textit{desired} solution profile, $C\in\B(Y,Z)$ an \textit{observation operator}.
  Define the quadratic objective functional $J(\cdot,\cdot;\bm{\mu})$
\begin{align}
  J(y,u;\bm{\mu}) = \frac{1}{2}\langle M(\bm{\mu})(Cy-z_d(\bm{\mu})), Cy-z_d(\bm{\mu})\rangle_{Z^*Z} + \frac{1}{2}\langle \Q(\bm{\mu})u,u\rangle_{U^*U},
  \label{eqn:quadratic_objective}
\end{align}
with $M(\bm{\mu}):\Theta\rightarrow\B(Z,Z^*),\  \Q(\bm{\mu}):\Theta\rightarrow\B(U,U^*)$ both self-adjoint.
\newline
Let $e(\cdot,\cdot,\bm{\mu})$ be affine, that is, there exist $A(\bm{\mu}):\Theta\rightarrow\B(Y,Q),\ B(\bm{\mu}):\Theta\rightarrow\B(U,Q),\ g(\bm{\mu}):\Theta\rightarrow Q$ such that 
\begin{align}
  e(y,u;\bm{\mu}) = A(\bm{\mu})y+B(\bm{\mu})u-g(\bm{\mu})\hspace{5mm} \forall y\in Y, u\in U.
  \label{eqn:affine_e}
\end{align}
Then the corresponding Problem \ref{prob:general_objective} is called a \textit{linear-quadratic} OCP.
\label{def:linear_quadratic}
\end{definition}

\section{Solutions of Optimal Control Problems}
\label{sec:ocp}
In this section we describe the well-posedness of an OCP, temporarily dropping the parameter from the notation. The description is generalized from Hilbert to Banach spaces $Y,U$ and $Q$. Throughout this section, we make the following assumption.

\begin{assumption}
  It holds that
      \begin{enumerate}
	\item $W_{ad}=Y\times U_{ad}$ for $U_{ad}\subset U$,
	\item for every $u\in U_{ad}$ there exists a unique $y=y(u)\in Y$ \gcok{with} $e(y,u)=0$.
      \end{enumerate}
  \label{ass:differentiability2}
\end{assumption}

While the first assumption is made for the sake of simplicity, the second assumption is fundamental for the use of the adjoint approach, as it allows us to consider the following (unparametrized) reformulation of Problem \ref{prob:general_objective}:

\begin{problem}
  \gcok{Let us minimize} $\tilde{J}(u):=J(y(u),u)$ over all $u\in U_{ad}$.
  \label{prob:reduced_objective}
\end{problem}

Existence and uniqueness of Problem \ref{prob:reduced_objective} is discussed in the following proposition. If $\J$ is G-differentiable (Gateaux differentiable) in $u$, then we denote the G-derivative in $u$ by $D\J(u)\in\B(U,U^*)$.

\begin{proposition}
  \gcok{Let us suppose} that Assumption \ref{ass:differentiability2} holds and that $U_{ad}$ is nonempty and convex.
  \begin{enumerate}[label=(\roman*)]
    \item For \gcok{the} uniqueness of \gcok{the} solution of Problem \ref{prob:reduced_objective}, it suffices that $\J$ is strictly convex.
    \item For \gcok{the existence of a} solution of Problem \ref{prob:reduced_objective}, the following set of conditions is sufficient: $U$ is reflexive, ${U}_{ad}$ is closed, $\J$ is weakly lower semicontinuous and $\J(u)\rightarrow\infty$ whenever $\norm{u}_U\rightarrow\infty$ in $ U_{ad}$.
    \item If $u\in U_{ad}$ is a local optimizer, and if $\J$ is {\upshape G}-differentiable in $u$, then $u$ satisfies
      	\begin{align}
	  \langle D\J(u),\tilde{u}-u\rangle_{U^*U}\geq 0\hspace{5mm} \forall \tilde{u}\in  U_{ad}.
  		\label{eqn:optimality}
  	\end{align}
	If $\J$ is convex and (\ref{eqn:optimality}) holds for some $u\in U_{ad}$, then $u$ is a global optimizer.
      \item For \gcok{the existence of} a solution of Problem \ref{prob:reduced_objective}, the following conditions are sufficient:\\
  $\J$ is convex and {\upshape G}-differentiable in a neighbourhood of $U_{ad}$, and (\ref{eqn:optimality}) holds for some $u\in U_{ad}$.
 \end{enumerate}
  \label{prop:general_criterion}
\end{proposition}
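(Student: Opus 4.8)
The plan is to dispatch the four parts in order, each via a textbook argument from the calculus of variations, carried over essentially verbatim to the Banach-space setting. For (i) I would argue by contradiction: if $u_1\neq u_2$ in $U_{ad}$ both attain the minimum value $m$ of $\J$, then $\frac{1}{2}(u_1+u_2)\in U_{ad}$ by convexity of $U_{ad}$, while strict convexity gives $\J(\frac{1}{2}(u_1+u_2))<\frac{1}{2}\J(u_1)+\frac{1}{2}\J(u_2)=m$, contradicting minimality.

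For (ii) I would use the direct method. Let $(u_n)\subset U_{ad}$ be a minimizing sequence. The coercivity hypothesis $\J(u)\to\infty$ as $\norm{u}_U\to\infty$ in $U_{ad}$ rules out $\norm{u_{n_k}}_U\to\infty$ along a subsequence, so $(u_n)$ is bounded; since $U$ is reflexive it has a weakly convergent subsequence $u_{n_k}\rightharpoonup u^\ast$; since $U_{ad}$ is convex and closed it is weakly closed, so $u^\ast\in U_{ad}$; and weak lower semicontinuity gives $\J(u^\ast)\le\liminf_k\J(u_{n_k})=\inf_{U_{ad}}\J\le\J(u^\ast)$, the last inequality because $u^\ast\in U_{ad}$. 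Hence all these quantities coincide with the real number $\J(u^\ast)$ (which in particular shows the infimum is finite), and $u^\ast$ solves Problem~\ref{prob:reduced_objective}.

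For (iii), necessity: given a local optimizer $u$ and any $\tilde u\in U_{ad}$, convexity of $U_{ad}$ puts $u+t(\tilde u-u)$ in $U_{ad}$ for $t\in[0,1]$, and for small $t>0$ also within the neighbourhood of local minimality, so the difference quotient $t^{-1}\big(\J(u+t(\tilde u-u))-\J(u)\big)$ is nonnegative; letting $t\downarrow 0$ and invoking G-differentiability yields $\langle D\J(u),\tilde u-u\rangle_{U^*U}\ge 0$. For the converse, from convexity $\J(u+t(\tilde u-u))\le(1-t)\J(u)+t\J(\tilde u)$, so dividing by $t$ and letting $t\downarrow 0$ gives the tangent inequality $\J(\tilde u)\ge\J(u)+\langle D\J(u),\tilde u-u\rangle_{U^*U}$; combined with (\ref{eqn:optimality}) this forces $\J(\tilde u)\ge\J(u)$ for all $\tilde u\in U_{ad}$, i.e.\ $u$ is a global optimizer. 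Part (iv) is then immediate, being exactly the sufficiency direction of (iii): the $u$ satisfying (\ref{eqn:optimality}) is a global minimizer, so Problem~\ref{prob:reduced_objective} has a solution.

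I do not expect a genuine obstacle here. The only places that demand care are, in (ii), extracting a weak limit — which relies on reflexivity of $U$ — and identifying $U_{ad}$ as weakly closed via the convex-closed $\Rightarrow$ weakly-closed principle (Mazur/Hahn–Banach), and, in (iii), the interchange of the limit $t\downarrow 0$ with the duality pairing, which is nothing more than the definition of the Gateaux derivative (plus, for the converse, the elementary fact that a differentiable convex functional lies above each of its tangents). So the work is essentially bookkeeping to confirm that the familiar Hilbert-space arguments survive on Banach spaces.
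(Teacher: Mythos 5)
Your proof is correct and follows exactly the standard route — strict convexity for uniqueness, the direct method (coercivity, reflexivity, Mazur, weak lower semicontinuity) for existence, and the variational-inequality/tangent-inequality argument for the first-order conditions. The paper itself does not write out a proof but simply cites Lions (Ch.~1, \S1.2--1.3) and Hinze et al.\ (Thm.~1.46), which contain precisely these arguments, so your proposal matches the intended approach.
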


\begin{proof}
  See Subsections 1.2, 1.3 in Chapter 1 of \cite{lions71}, \gcok{Theorem} 1.46 in \cite{hinze09}.
\end{proof}

By explicitly computing the derivative of $\tilde{J}$, these statements lead to the following conclusion on well-posedness of Problem \ref{prob:reduced_objective}.

\begin{theorem}
  Suppose that Assumption \ref{ass:differentiability2} holds, $J$ and $e$ are continuously {\upshape F}-differentiable and $e_y(y(u),u)\in \B(Y,Z)$ has a bounded inverse for each $u\in U$. Then $\J$ is {\upshape F}-differentiable.  Suppose further that
  \begin{enumerate}
    \item $U$ is reflexive,
    \item $\J$ is strictly convex and $\J(u)\rightarrow\infty$ when $\norm{u}_U\rightarrow\infty$ in ${U}_{ad}$,
    \item ${U}_{ad}$ is nonempty, closed and convex.
  \end{enumerate}
  Then there exists a unique solution $u\in U_{ad}$ to Problem \ref{prob:reduced_objective} and it obeys, for some unique $p(u)\in Q^*$,
  \begin{equation}
    \begin{aligned}
      e(y(u),u) &=0  &\text{in}\ Q, \\
      e_y(y(u),u)^*p(u)+J_y(y(u),u)&=0 &\text{in}\ Y^*,\\
      \langle e_u(y(u),u)^*p(u) + J_u(y(u),u),\tilde{u}\rangle_{U^*U}&\geq 0 &\forall \tilde{u}\in U_{ad}.
    \end{aligned}
      \label{eqn:general_optimizing_equations}
  \end{equation}
  \label{thm:constrained_opt_eqns}
\end{theorem}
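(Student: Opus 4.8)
The plan is to split the argument into three parts: first show that $\tilde{J}$ is F-differentiable and compute $D\tilde{J}$, then invoke Proposition \ref{prop:general_criterion} to obtain a unique minimizer obeying the variational inequality \eqref{eqn:optimality}, and finally introduce the adjoint state $p(u)$ to recast that inequality in the stated form. For the first part I would apply the implicit function theorem on Banach spaces to $e$: since $e$ is continuously F-differentiable and, by hypothesis, $e_y(y(u),u)$ is boundedly invertible for every $u$, around each $u_0\in U$ the theorem produces a continuously F-differentiable local solution branch $u\mapsto\hat{y}(u)$ of $e(\hat{y}(u),u)=0$ with derivative $-e_y(\hat{y}(u),u)^{-1}e_u(\hat{y}(u),u)$; by the global uniqueness in Assumption \ref{ass:differentiability2} this branch coincides with $u\mapsto y(u)$, so $y(\cdot)$ is continuously F-differentiable on all of $U$ with $y'(u)=-e_y(y(u),u)^{-1}e_u(y(u),u)\in\B(U,Y)$. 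The chain rule then yields that $\tilde{J}=J(y(\cdot),\cdot)$ is F-differentiable with
\[
  \langle D\tilde{J}(u),v\rangle_{U^*U}=\langle J_y(y(u),u),y'(u)v\rangle_{Y^*Y}+\langle J_u(y(u),u),v\rangle_{U^*U}\qquad\forall v\in U.
\]

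For the second part, F-differentiability implies continuity of $\tilde{J}$, and a continuous convex functional on a Banach space is weakly lower semicontinuous; strict convexity of $\tilde{J}$ thus supplies both the weak lower semicontinuity required by Proposition \ref{prop:general_criterion}(ii) and the strict convexity required by (i). Combined with the reflexivity of $U$, the nonemptiness, closedness and convexity of $U_{ad}$, and the coercivity $\tilde{J}(u)\to\infty$ as $\norm{u}_U\to\infty$ in $U_{ad}$, Proposition \ref{prop:general_criterion} gives a unique minimizer $u\in U_{ad}$ of Problem \ref{prob:reduced_objective}, and part (iii) shows it satisfies \eqref{eqn:optimality}.

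For the third part, I would define $p(u):=-(e_y(y(u),u)^*)^{-1}J_y(y(u),u)\in Q^*$, which is the unique solution of the adjoint equation $e_y(y(u),u)^*p(u)+J_y(y(u),u)=0$ in $Y^*$, because the adjoint of a boundedly invertible operator is boundedly invertible, with $(e_y(y(u),u)^*)^{-1}=(e_y(y(u),u)^{-1})^*$. Substituting the formula for $y'(u)$ into $D\tilde{J}(u)$, then the adjoint equation, and using the identity $\langle e_y(y(u),u)^*q,e_y(y(u),u)^{-1}w\rangle_{Y^*Y}=\langle q,w\rangle_{Q^*Q}$ gives
\[
  \langle D\tilde{J}(u),v\rangle_{U^*U}=\langle J_u(y(u),u)+e_u(y(u),u)^*p(u),v\rangle_{U^*U}\qquad\forall v\in U,
\]
so that \eqref{eqn:optimality} becomes the third line of \eqref{eqn:general_optimizing_equations}; the first line is Assumption \ref{ass:differentiability2} evaluated at the minimizer, and uniqueness of $p(u)$ follows from injectivity of $e_y(y(u),u)^*$.

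I expect the main obstacle to be the careful application of the Banach-space implicit function theorem in the first part — matching the local differentiable branch with the globally defined solution $y(u)$ and propagating continuous F-differentiability to all of $U$ — and, secondarily, keeping the dual-pairing bookkeeping with the adjoint operators straight in the third part.
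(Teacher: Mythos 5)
Your proposal is correct and follows exactly the route the paper indicates: the paper's proof is only a pointer stating that the argument rests on the Implicit Function Theorem (citing Lions, Ch.~2, Sec.~1 and Hinze et al., Secs.~1.5.1 and 1.6.2), and your three-step outline --- IFT to get continuous F-differentiability of $u\mapsto y(u)$ and the chain-rule formula for $D\J$, Proposition \ref{prop:general_criterion} for existence, uniqueness and the variational inequality, and the adjoint state $p(u)=-(e_y^*)^{-1}J_y$ to rewrite $D\J(u)$ as $e_u^*p(u)+J_u$ --- is precisely the standard adjoint-approach argument from those references. The dual-pairing bookkeeping in your third step is consistent, so no gaps.
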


\begin{proof}
  The proof is based on an application of the Implicit Function Theorem. See Section 1 in Chapter 2 of \cite{lions71}, or Sections 1.5.1 and 1.6.2 of \cite{hinze09}.
\end{proof}

\begin{remark}
  The conclusion of Theorem \ref{thm:constrained_opt_eqns} also holds for more general forms of $W_{ad}$ and in the case the maps $J$ and $e$ are only F-differentiable in a neighbourhood of $U_{ad}$. See \cite[Section 1.7]{hinze09} and \cite[Section 1.3]{carere20}.
\end{remark}

The first equation of (\ref{eqn:general_optimizing_equations}) is again the state equation, while the second and third are known as the \textit{adjoint equation} and \textit{optimality criterion}, respectively.
In the case of a linear-quadratic problem, we can now say the following.

\begin{corollary}
  Suppose $U_{ad}$ is nonempty, closed and convex and that $U$ is reflexive. In the case of a linear-quadratic problem, in which $J$ and $e$ are of the form (\ref{eqn:quadratic_objective}) and (\ref{eqn:affine_e}), respectively, with $A$ boundedly invertible, $M(z,z)\geq 0$ for $z\in Z$, and $\Q$ coercive, there exists a unique minimizer $u\in U_{ad}$ of Problem \ref{prob:reduced_objective} such that, for some unique $p(u)\in Q^*$,
  \begin{equation}
  \begin{aligned}
    Ay(u)+Bu				&=g		&\text{\ in\ } Q,\\
    C^*MCy(u)+A^*p(u) 			&= C^*Mz_d 	&\text{\ in\ } Y^*,\\
    \langle \Q u+B^*p(u),\tilde{u}-u\rangle_{U^*U} 	&\geq 0  	&\forall \tilde{u}\in  U_{ad}.
  \end{aligned}
  \label{eqn:gen_lin_qdr_opt}
\end{equation}
  \label{cor:general_linear_quadratic}
\end{corollary}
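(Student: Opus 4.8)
The plan is to obtain the statement as a specialization of Theorem~\ref{thm:constrained_opt_eqns}: I would first check that the linear-quadratic data meet its hypotheses, and then write out the abstract optimality system~(\ref{eqn:general_optimizing_equations}) with the derivatives computed explicitly. For $J$ of the form~(\ref{eqn:quadratic_objective}) and $e$ of the form~(\ref{eqn:affine_e}) both maps are continuously (in fact infinitely) F-differentiable, with $J_y(y,u)=C^*M(Cy-z_d)$, $J_u(y,u)=\Q u$, $e_y(y,u)=A$ and $e_u(y,u)=B$; in particular $e_y(y(u),u)=A$ has a bounded inverse by assumption, so that part of Theorem~\ref{thm:constrained_opt_eqns} already yields F-differentiability of $\tilde{J}$. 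Assumption~\ref{ass:differentiability2} is satisfied, since $W_{ad}=Y\times U_{ad}$ and, $A$ being boundedly invertible, the state equation $Ay+Bu=g$ has for every $u\in U$ the unique solution $y(u)=A^{-1}(g-Bu)\in Y$.

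Next I would verify conditions 1--3 of Theorem~\ref{thm:constrained_opt_eqns}. Reflexivity of $U$ and the assumptions on $U_{ad}$ are given outright. For the remaining condition I substitute the affine map $u\mapsto y(u)$ into $J$: writing $Lu:=-CA^{-1}Bu$ and $c:=CA^{-1}g-z_d$, one gets $\tilde{J}(u)=\tfrac12\langle M(Lu+c),Lu+c\rangle_{Z^*Z}+\tfrac12\langle\Q u,u\rangle_{U^*U}$, whose quadratic part in $u$ is $\langle MLu,Lu\rangle_{Z^*Z}+\langle\Q u,u\rangle_{U^*U}$ (the cross terms being linear and $\langle Mc,c\rangle_{Z^*Z}$ constant). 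The first summand is $\geq 0$ because $M(z,z)\geq 0$ for every $z\in Z$, while the second is $\geq\alpha\norm{u}_U^2$ for some $\alpha>0$ by coercivity of $\Q$; hence $\tilde{J}$ is strictly convex, and moreover $\tilde{J}(u)\geq\tfrac{\alpha}{2}\norm{u}_U^2\to\infty$ as $\norm{u}_U\to\infty$ in $U_{ad}$. Theorem~\ref{thm:constrained_opt_eqns} then provides a unique minimizer $u\in U_{ad}$ together with a unique $p(u)\in Q^*$ solving~(\ref{eqn:general_optimizing_equations}).

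It remains to rewrite~(\ref{eqn:general_optimizing_equations}) with the explicit derivatives. The state equation $e(y(u),u)=0$ is $Ay(u)+Bu=g$ in $Q$. The adjoint equation $e_y(y(u),u)^*p(u)+J_y(y(u),u)=0$ reads $A^*p(u)+C^*M(Cy(u)-z_d)=0$, i.e.\ $C^*MCy(u)+A^*p(u)=C^*Mz_d$ in $Y^*$, where uniqueness of $p(u)$ is transparent since $A^*$ is boundedly invertible. The optimality criterion $\langle e_u(y(u),u)^*p(u)+J_u(y(u),u),\tilde{u}-u\rangle_{U^*U}\geq 0$ becomes $\langle\Q u+B^*p(u),\tilde{u}-u\rangle_{U^*U}\geq 0$ for all $\tilde{u}\in U_{ad}$, which is exactly~(\ref{eqn:gen_lin_qdr_opt}). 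The only step with genuine content is the second paragraph, the strict convexity and radial unboundedness of $\tilde{J}$; the subtlety there is that convexity of the observation term needs only $M\geq 0$, whereas it is coercivity of $\Q$ that simultaneously supplies strictness and the coercivity of $\tilde{J}$ — everything else is bookkeeping on top of the already-cited theorem.
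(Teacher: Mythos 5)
Your proposal is correct and follows essentially the same route as the paper's own proof: both specialize Theorem~\ref{thm:constrained_opt_eqns} by computing the derivatives $J_y$, $J_u$, $e_y=A$, $e_u=B$, verifying Assumption~\ref{ass:differentiability2} via $y(u)=A^{-1}(g-Bu)$, deducing strict convexity and radial unboundedness of $\tilde{J}$ from the coercivity of $\Q$ together with $M\geq 0$ and the affinity of $e$, and then substituting into~(\ref{eqn:general_optimizing_equations}). Your explicit quadratic expansion of $\tilde{J}$ via $Lu=-CA^{-1}Bu$ is just a slightly more detailed rendering of the same argument.
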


\begin{proof}
  For $J$ being of quadratic form, F-differentiability is immediate. Its partial derivates can easily be computed, as $M$ and $\Q$ are self-adjoint, to be $\langle J_u(y,u),\tilde{u}\rangle_{U^*U}= \langle \Q\tilde{u},u\rangle_{U^*U}$ and $\langle J_y(y,u),\tilde{y}\rangle_{Y^*Y} = \langle MC\tilde{y},Cy\rangle_{Z^*Z}-\langle MC\tilde{y},z_d\rangle_{Z^*Z}$, for $\tilde{u}\in U, \tilde{y}\in Y$. Similarly, $e$ is F-differentiable with $e_y=A,e_u=B$. We see that all partial derivatives are continuous, so that $J$ and $e$ are continuously F-differentiable.
Clearly, $y(u)=A^{-1}(g-Bu)$ and Assumption \ref{ass:differentiability2} holds. The conditions on $M$ and $\Q$ imply $\J(u)\geq \lambda \norm{u}_U^2$, where $\lambda$ denotes the coercivity constant of $\Q$. Hence $\J(u)\rightarrow\infty$ as $u\rightarrow\infty$. By the same assumption on $\Q$ it follows that $u\mapsto \langle \Q u,u\rangle_{U^*U}$ is strictly convex. Furthermore, $u\mapsto \langle M(Cy(u)-z_d),Cy(u)-z_d\rangle_{Z^*Z}$ is convex by the assumption on $M$ and the affinity of $e$. Strict convexity of $\J$ follows.
The result now follows from (\ref{eqn:general_optimizing_equations}) by substitution.
\end{proof}

\begin{remark}
  \gcok{The assumptions for well-posedness usually considered in \reviewerB{the} literature often include that $Y=Q^*$ and $A$ is coercive, i.e. $\sup_{0\not=y\in Y}\langle Ay,y\rangle_{QQ^*}>0$, but well-posedness still holds if the first is omitted and if $A$ is merely boundedly invertible.}
  Note also that reflexivity of $Y$ and $Q$ is not required.
\end{remark}

So to solve a linear-quadratic OCP under the conditions of Corollary \ref{cor:general_linear_quadratic}, we must find the triple $(y,u,p)\in Y\times U\times Q^*$ that solves the system (\ref{eqn:gen_lin_qdr_opt}). In the \textit{full admissibility} case, that is, in the case $W_{ad}=Y\times U$, the third equation becomes $\Q u+B^*p = 0$ in $U^*$. Assuming also that $Q$ is reflexive, so that $Q=P^*$ with $P:=Q^*$, system (\ref{eqn:gen_lin_qdr_opt}) then can be written as
\begin{equation}
  \begin{aligned}
    &C^*MCy(u)	& & 	&+&A^*p(u) 	&	&= C^*Mz_d 	& &\text{ in } Y^*,\\
    & 		& &\Q u	&+&B^*p(u) 	&	&=0 		& &\text{ in } U^*,\\
    &Ay(u)	&+&Bu 	& &		&	&=g		& &\text{ in } P^*.
  \end{aligned}
  \label{eqn:lin_qdr_opt}
\end{equation}
The Banach space $P$ is called the \textit{adjoint space}, \gcok{and} $p(u)$ the \textit{adjoint solution}.

\begin{remark}
  The obtained conclusions for linear-quadratic problems with full admissibility can also be recovered by studying saddle points of a Lagrangian, see e.g. \cite{karcher14}. For a comparison between the Lagrangian approach and the adjoint approach presented here, see \cite{carere20}.
\end{remark}

\section{Truth Approximations}
\label{sec:truth_approximation}
We present a standard approximation procedure for linear-quadratic OCPs with full admissibility based on Galerkin Projection. We assume that $U$ and $Q$ are reflexive and write $P=Q^*$. We also assume that $\Q(\bm{\mu})$ is coercive almost everywhere and that $\langle M(\bm{\mu})z,z\rangle_{Z^*Z}\geq 0$ for $z\in Z$ almost everywhere.
\newline
\newline
Taking closed subspaces $Y^\nd\subset Y,\  U^\nd\subset U$ and $P^\nd\subset P$, we obtain for an outcome $\mu$ of $\bm{\mu}$, an approximation $(y^\nd(\mu),u^\nd(\mu),p^\nd(\mu))\in Y^\nd\times U^\nd\times P^\nd$ of $(y(\mu),u(\mu),p(\mu))$ by a Galerkin Projection of $(y(\mu),u(\mu),p(\mu))$ onto $Y^{\nd}\times U^\nd \times P^\nd$. That is, we solve (\ref{eqn:lin_qdr_opt}) with $Y,U$ and $P$ replaced by respectively $Y^\nd,U^\nd$ and $P^\nd$. Defining 
\begin{align*}
  A^\nd(\bm{\mu})&\in L^2(\Theta;\B(Y^\nd,(P^\nd)^*)), &\hspace{5mm} &\text{by}& A^\nd(\bm{\mu}) y^\nd &= (A(\bm{\mu})y^\nd)_{|P^\nd},\\
  B^\nd(\bm{\mu})&\in L^2(\Theta;\B(U^\nd,(P^\nd)^*)), &\hspace{5mm} &\text{by}& B^\nd(\bm{\mu}) u^\nd &= (B(\bm{\mu})u^\nd)_{|P^\nd}, 
\end{align*}
we notice that this amounts to solving, $\P^{\bm{\mu}}$-almost everywhere, the OCP
\begin{align*}
  \text{minimize } &J_{|Y^\nd\times U^\nd}(\tilde{y}^\nd,\tilde{u}^\nd;\mu) \text{ s.t. } A^\nd(\mu) \tilde{y}^\nd + B^\nd(\mu) \tilde{u}^\nd = (g(\mu))_{|P^\nd} \\
  &\text{over all } (\tilde{y}^\nd,\tilde{u}^\nd) \in Y^\nd\times U^\nd.
\end{align*}

This approximate problem is called the \textit{truth problem}. By Corollary \ref{cor:general_linear_quadratic}, it is well-posed if $A^\nd({\mu})$ is boundedly invertible, as all other conditions are inherited from the original OCP in the continuous formulation. In that case, the approximate solution $(y^\nd(\mu),u^\nd(\mu),p^\nd(\mu))$ is called the \textit{truth approximation}. In Section \ref{sec:applications} we shall take $Y^\nd,\ U^\nd$ and $P^\nd$ to be Finite Element spaces of dimension of order $O(\nd)$\gcok{, see e.g. \cite[Chapters~3,5,6]{quarteroni94}}.

\begin{remark}
  Just as for well-posedness of the original OCP it is not necessary for $Y$ and $P$ to be equal, it is not required for $Y^\nd$ and $P^\nd$ to be equal, as long as $A^\nd(\mu)$ is boundedly invertible.
\end{remark}

\section{Reduced Order Method}
\label{sec:rom}
In addition to the assumptions of Section \ref{sec:truth_approximation}, let us now assume that $Y$, $U$ and $Q$ are Hilbert spaces, so that also $P$ is a Hilbert space. If $\mu\in\m$ is an outcome of $\bm{\mu}$, then we could compute the expensive truth approximation $(y^\nd(\mu),u^\nd(\mu),p^\nd(\mu))$ by performing a Galerkin Projection of $(y(\mu),u(\mu),p(\mu))$ onto the subspace $Y^\nd\times U^\nd\times P^\nd$, which has a high dimension of order $O(\nd)$. This high-dimensionality is necessary to ensure the accuracy of the truth approximations. Since solving many truth problems becomes computationally infeasible, in this section we propose the construction of a cheap \textit{reduced \gcok{basis} approximation} $(y_\nr(\mu),u_\nr(\mu),p_\nr(\mu))$ using a ROM, \cite{haasdonk10, rozza2016}. We discuss the implementation of ROMs for OCPs as already presented in \cite{bader2017, dede2010reduced, karcher2018certified, negri15, negri13}.

\subsection{Reduced basis approximation}
Given $\mu\in\m$, the reduced \gcok{basis} approximation $(y_\nr(\mu),u_\nr(\mu),p_\nr(\mu))$ can be obtained by performing a Galerkin Projection of $(y^\nd(\mu), u^\nd(\mu), p^\nd(\mu))$ onto a subspace $\X_\nr$ of $Y^\nd \times U^\nd \times P^\nd$, which has a low dimension of order $O(\nr)$, with $\nr\ll\nd$. In this subsection we discuss the construction of such a space based on the approach known as \textit{weighted Proper Orthogonal Decomposition} (\textit{weighted POD}), or just \textit{Proper Orthogonal Decomposition} (\textit{POD}). The process of this construction is called the \textit{offline phase}, and the resulting low dimensional space is called the \textit{reduced basis space}. The act of performing the Galerkin Projection onto the reduced basis space for a given outcome $\mu$ of $\bm{\mu}$ is known as the \textit{online phase}.
\newline
\newline
Given such an outcome $\mu\in\m$, the solution to the OCP $(y(\mu),u(\mu),p(\mu))$ can be approximated by the truth solution $(y^\nd(\mu),u^\nd(\mu),p^\nd(\mu))$, which, in turn, can be approximated with the reduced \gcok{basis} approximation \gcok{denoted by} $(y_\nr(\mu),u_\nr(\mu),p_\nr(\mu))$. In order for the ROM to have any use, one should be able to construct arbitrarily precise truth approximations, uniformly in the parameter. The whole task then is to construct a ROM in such a way that it is effective, i.e. that the error in approximating the truth solution by the reduced \gcok{basis} approximation is also small, and that $(y_\nr(\mu),u_\nr(\mu),p_\nr(\mu)$ can be computed efficiently.

\subsection{Offline phase: weighted Proper Orthogonal Decomposition}
\label{subsec:offline}
Let us write $\X=Y\times U\times P$. \gcok{Assuming} $\chi^\nd(\bm{\mu}):=(y^\nd(\bm{\mu}),u^\nd(\bm{\mu}),p^\nd(\bm{\mu})\in L^2(\Theta;\X)$, in order to find a suitable low dimensional space of $\X^\nd:=Y^\nd\times U^\nd\times P^\nd$, one could choose to construct the subspace $\X_\nr\subset\X^\nd$ in such a way that it minimizes the expected squared error
\begin{align*}
  \mathbb{E}\norm{\chi^\nd(\mu)-P_V\chi^\nd(\mu)}_{\X}^2 = \int_{\m}^{}\norm{\chi^\nd(\mu)-P_V\chi^\nd(\mu)}_{\X}^2\d\P^{\bm{\mu}}(\mu),
\end{align*}
among all subspaces $V$ of $\X^\nd$ of dimension at most $\nr$. Here $P_V$ denotes the orthogonal projector onto $V$.
\newline
\newline
Recall that the (discrete) solution manifold $\M^\nd$ is defined as $\left\{ \chi^\nd(\mu), \mu\in\m \right\}$. Defining \gcok{$\mathcal{C}\in \B(\Span{\M^\nd},\Span{\M^\nd})$ to be the compact, self-adjoint and positive operator}
\begin{align}
\label{eqn:svd_opt}
\mathcal{C}v = \mathbb{E}\left[\langle v,\chi^\nd(\bm{\mu})\rangle_{\X}\,\chi^\nd(\bm{\mu})\right],
\end{align}
it is well known (e.g. \cite{schwab06,griebel18}) that $\X_\nr=\Span{\xi_1,\ldots,\xi_\nr}$, where $(\lambda_i,\xi_i)_i$ is the eigenvalue-eigenvector sequence of $\C$ in which the eigenvalues are ordered in a decreasing fashion. Notice that the eigenvalues are positive and can accumulate only in 0. The problem is that this expectation is in general not known. To circumvent this problem, the weighted POD method uses a quadrature rule to approximate it.
\newline
\newline
Let $\m_d:=\{\mu_1,\ldots,\mu_M\}\subset\m$ and $\{w_1,\ldots,w_M\}\subset \R$ denote, respectively, the nodes and weights of a quadrature rule for $\P^{\bm{\mu}}$. The subscript `$d$' stands for `discrete'. We admit only nonzero weights, and write $\chi_i$ for the $i$th \textit{snapshot} $\chi^\nd(\mu_i)$. Let us define $\mathcal{C}_d\in\B(\Span{\chi_1,\ldots,\chi_M},\Span{\chi_1,\ldots,\chi_M})$, the discrete counterpart of $\mathcal{C}$, as
\begin{align*}
  \mathcal{C}_d v = \sum_{i=1}^{M}w_i\langle v,\chi_i\rangle_{\X}\,\chi_i.
\end{align*}
This operator is compact and self-adjoint but positive definite only when all weights are positive. It is not difficult to show, however, that the space $\X_\nr$ that minimizes the approximate
\begin{align*}
  \sum_{i=1}^{M}w_i\norm{\chi_i-P_V\chi_i}_\X^2,
\end{align*}
of (\ref{eqn:svd_opt}) over all subspaces $V$ of $\X^\nd$ of dimension at most $\nr$, is given by $\X_\nr=\Span{\xi_1,\ldots,\xi_{\nr\wedge K}}$, where $(\lambda_i,\xi_i)_i$ now is the eigenpair sequence of $\C_d$ in which the eigenvalues are ordered in a decreasing fashion and $K$ is the number of positive eigenvalues, see e.g. \cite[Section~1.2]{venturi16}.
\newline
\newline
In a numerical implementation of the weighted POD, the operator $\mathcal{C}_d$ is expressed in a basis. Two possibilities for this are
\begin{itemize}
  \item expressing the operator $\mathcal{C}_d$ in the snapshot basis $(\chi_i)_{i=1}^M$. Let us denote the resulting matrix by $C$, so 
    $C_{ij}=w_i\langle \chi_j,\chi_i\rangle_\X$. Define also the positive definite, symmetric matrix $C_0=(\frac{1}{M}\langle \chi_j,\chi_i\rangle_\X)_{ij}$ and the weight matrix $P=\text{diag}(w_1,\ldots,w_M)$. To get the basis $(\xi_i)_{i=1}^\nr$ of the minimizing subspace $\X_\nr$, the $N$ leading orthonormalized eigenvectors $(\bm{\xi}_i)_{i=1}^\nr$ of $C=MPC_0$ are computed\footnote{Instead of solving $MPC_0x=\lambda x$, it is preferable to solve $MC_0x = \lambda P^{-1}x$, as \gcok{$P^{-1}$ can be easily computed and more efficient solvers are available when $P$ is positive definite\gcok{.}}}. As these are expressed in the snapshot basis, we expand each of them in this snapshot basis to obtain $(\xi_i)_{i=1}^\nr$. That is, the $i$th basis function is $\xi_i=\sum_{j=1}^M(\bm{\xi}_i)_j\chi_j$.
  \item Another possibility, when the weights are all positive, is to express $\mathcal{C}_d$ in the weighted snapshot basis $(\sqrt{w_i}\chi_i)_{i=1}^M$. We denote the resulting matrix as $C_w$, so $(C_w)_{ij} = \sqrt{w_i}\sqrt{w_j}\langle \chi_j,\chi_i\rangle_\X$. Notice that $C_w$ is positive definite and symmetric. The $\nr$ leading orthonormalized eigenvectors $(\bm{\xi}^w_i)_{i=1}^\nr$ of $C_w$ are computed. As they are expressed in the weighted snapshot basis, the functions $(\xi_i)_{i=1}^\nr$ can be recovered by expanding $(\bm{\xi}^w_i)_{i=1}^\nr$ in this weighted snapshot basis: $\xi_i = \sum_{j=1}^{M}(\bm{\xi}^w_i)_j\sqrt{w_j}\chi_j$.
\end{itemize}
The weighted POD thus picks out the most important directions\gcok{, according to an $L^2$-type criterion,} of $\Span{\chi_1,\ldots,\chi_M}$. As only one solution manifold is involved, this is known as the \textit{monolithic approach}. Instead, we shall perform separate weighted PODs on each of the solution manifolds
\begin{align*}
  \{y^\nd(\mu_1),\ldots,y^\nd(\mu_M)\},\hspace{5mm}\{u^\nd(\mu_1),\ldots,u^\nd(\mu_M)\},\hspace{5mm}\{p^\nd(\mu_1),\ldots,p^\nd(\mu_M)\}.
\end{align*}
The resulting low dimensional spaces $Y_\nr,\ U_\nr$ and $P_\nr$ are combined to furnish the reduced basis space $\X_\nr=Y_\nr\times U_\nr\times P_\nr$. This approach, known as the \textit{partitioned approach}, has been observed to be preferable to the monolithic approach in a variety of scenarios, by \cite{maria17}. Of course, if a solution manifold is already contained in a subspace of dimension at most $\nr$, say the solution manifold of the control $\left\{ u^\nd(\mu_1),\ldots,u^\nd(\mu_M) \right\}$, no POD is needed and one simply takes the linear span of this manifold as $U_\nr$. If $Y,U$, or $P$ is a product of Hilbert spaces itself, more POD compressions could also be performed, for example one for each space in this product.

\begin{remark}
The term ``POD'' is often reserved for a weighted POD in which a Monte Carlo sampler is used as quadrature rule, \gcok{see \cite{venturi19,venturitorlo19}}.
\end{remark}

\begin{remark}
  Some quadrature rules, such as Gaussian quadrature, rely on appropriate smoothness of the map $\mu\mapsto \chi^\nd(\mu)$.
\end{remark}

\subsection{Online phase}
The online phase now consists of solving the system (\ref{eqn:lin_qdr_opt}) with $Y,U,P$ replaced by $Y_\nr,U_\nr,P_\nr$, respectively, for any $\mu\in\m$ of interest. That is, we perform a Galerkin projection of $(y(\mu),u(\mu),p(\mu))$ onto $Y_\nr\times U_\nr\times P_\nr$. This amounts to solving a system of size $3\nr\times 3\nr$, if $\dim{Y_\nr}=\dim{U_\nr}=\dim{P_\nr}=\nr$. It should be noted that for this reduced problem to be well-posed for ($\P^{\bm{\mu}}$-almost) every $\mu\in\m$, the operator $A_\nr(\mu)\in\B(Y_\nr,(P_\nr)^*)$ defined by $A_\nr(\mu)\tilde{y}_\nr = (A(\mu)\tilde{y})_{|Y_\nr}$ for $\tilde{y}_\nr\in Y_\nr$, should be boundedly invertible for ($\P^{\bm{\mu}}$-almost) every $\mu\in\m$.

\begin{remark}
  It is customary to define an \textit{aggregate space} or \textit{integrated space} $Z_\nr=\Span{Y_\nr,P_\nr}$ and put $\X_\nr=Z_\nr\times U_\nr\times Z_\nr$ as reduced basis space instead. In that case the subspaces of $Y$ and $P$ coincide but the system to solve online is of the larger size $5\nr\times 5\nr$. \gcok{We stress that it is not fundamental to perform this step in order for the OCP to be well-posed. It \textit{is} fundamental that $A_\nr(\mu)$ is boundedly invertible for every $\mu$, as follows from the same argumentation of Section \ref{sec:truth_approximation}. If $A(\mu)$ happens to be coercive, then one can use aggregate spaces to inherit the coercivity and hence invertibility of $A_\nr(\mu)$ from $A(\mu)$. However, in a noncoercive setting, the invertibility of $A_\nr(\mu)$ should be guaranteed in a different way, e.g. by using supremizer solutions, see \cite{huynh13,rozza07}.}
\end{remark}

In a numerical implementation of the online phase, the optimality system must be expressed in a basis of $\X_\nr$. If the computational complexity of this operation is independent of $\nd$, then so is the complexity of the online phase. The problem is that such a basis is itself expressed in the $\nd$-dimensional snapshot basis, so that this can not be guaranteed without additional assumptions. To this end, we require that $A(\bm{\mu}), B(\bm{\mu}), M(\bm{\mu}), \Q(\bm{\mu}), g(\bm{\mu})$ and $z_d(\bm{\mu})$ all adhere to the following separation of the variables.
\begin{assumption}
  We assume the following \textit{affine decompositions} hold (for $\P$-almost every $\mu$):
\begin{alignat*}{2}
  &A(\mu) = \sum_{q=1}^{Q_A}\Lambda^q_A(\mu)A_q,
  \hspace{15mm}&&B(\mu) = \sum_{q=1}^{Q_B}\Lambda^q_B(\mu)B_q,\\
  &M(\mu) = \sum_{q=1}^{Q_M}\Lambda^q_M(\mu)M_q,
  &&\Q(\mu) = \sum_{q=1}^{Q_\Q}\Lambda^q_\Q(\mu)\Q_q,\\
  &g(\mu) = \sum_{q=1}^{Q_g}\Lambda^q_g(\mu)g_q,
  &&z_d(\mu) = \sum_{q=1}^{Q_{z_d}}\Lambda^q_{z_d}(\mu)(z_d)_q,
\end{alignat*}
where
\begin{itemize}
  \item $A_q\in\B(Y,P^*),\ B_q\in\B(U,P^*),\ M_q\in \B(Z,Z^*),\ \Q_q\in\B(U,U^*),\ g_q\in P^*,\ z_d^q\in Z$,
  \item $\Lambda^q_A, \Lambda^q_B, \Lambda^q_M, \Lambda^q_\Q, \Lambda^q_g, \Lambda^q_{z_d}:\m\rightarrow\R$,
  \item $Q_A,Q_B,Q_M,Q_\Q,Q_g,Q_{z_d}\in\N.$
\end{itemize}
  \label{ass:affine}
\end{assumption}
Further argumentation can be found in \cite[Section~3.3]{rozza2016}.
If the above parametric maps are continuous, then under some additional assumptions on the involved operators the solution maps
\begin{align*}
  &\mu\mapsto (y(\mu),u(\mu),p(\mu)),\hspace{5mm}\\&\mu\mapsto (y^\nd(\mu),u^\nd(\mu),p^\nd(\mu)),\hspace{5mm}\\&\mu\mapsto (y_\nr(\mu),u_\nr(\mu),p_\nr(\mu)),
\end{align*}
are continuous as well, and are therefore measurable, bounded, and in $L^2(\m;Y\times U\times P)$, see Proposition 2.2.6 in  \cite{carere20}.

\section{Numerical Applications}
\label{sec:applications}
\graphicspath{{images/golfo/}}
In this section we extend the three environmental applications, initially introduced in \gcok{\cite{mariatesi, maria17}}, by modelling the random variable $\bm{\mu}$ to have a distribution other than the uniform distribution, and assess the results of the ROMs constructed by a weighted POD approach. In these examples, the choice of training and testing set sizes are based on \cite{maria17}. In the first application the spill of hypothetical pollutant in a fluid described by an elliptic PDE is studied. The second and third examples consist of an \gcok{o}cean circulation model, which is based on the quasi-geostrophic equations which form a noncoercive PDE. In these latter two numerical examples, a comparison between the results obtained with and without the usual aggregation procedure is given, but we shall not be too concerned with the well-posedness of the OCPs in these two examples.

\subsection{Hypothetical pollution in the Gulf of Trieste}
We model a hypothetical spill of a pollutant near the harbor of Koper, Slovenia, in the Gulf of Trieste, Italy. It is governed by an elliptic \gcok{steady} advection-diffusion equation.

\paragraph{Problem Formulation}
The domain $\Omega\subset\R^2$ with Lipschitz boundary is an approximation of the geographical area of the Gulf of Trieste. A fine triangulation $\T_h$ of mesh size $h>0$ has been constructed and shared by the authors of \cite{maria17}. In Figure \ref{fig:golfo_domain} (\textit{left/center}) this domain is shown, as well as $\T_h$. Two subdomains of $\Omega$ are of particular importance, the subdomain of the spill $\Omega_u$, and the domain of observation, $\Omega_{obs}$.

\begin{description}
  \item[$\Omega_u$] \hspace{2.6mm}the area in which the pollutant is spilled, corresponding to the geographical area of the harbor of Koper, Slovenia,
  \item[$\Omega_{obs}$] the domain of observation, the Miramare natural reserve, which is of interest being a protected environment due to its ecological flora and fauna and being a prominent area to relax for tourists and the citizens of Trieste.
\end{description}

The boundary $\Gamma:=\partial\Omega$ is subdivided in a Dirichlet boundary $\Gamma_D$ and a Neumann boundary $\Gamma_N$, where $\Gamma=\Gamma_D\cup\Gamma_N$ and $\Gamma_D\cap\Gamma_N=\emptyset$, and on which homogeneous Dirichlet and Neumann boundary conditions are imposed, respectively. The Dirichlet boundary corresponds to the coastal part of $\Gamma$ while the Neumann boundary corresponds to open sea, see Figure \ref{fig:golfo_domain}.

\begin{figure}[!htb]
  \makebox[\textwidth][c]{
    \hspace{12mm}
    \begin{subfigure}{0.5\textwidth}
      \vspace{-52.5mm}
      \includegraphics[width=0.6\textwidth]{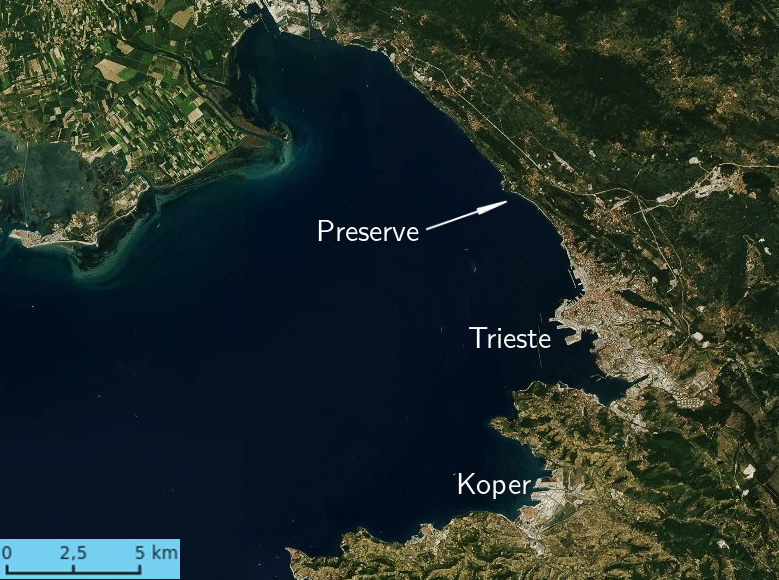}
    \end{subfigure}
    \hspace{-52mm}
    \begin{subfigure}[b]{0.6\textwidth}
      \includegraphics[width=1.1\textwidth]{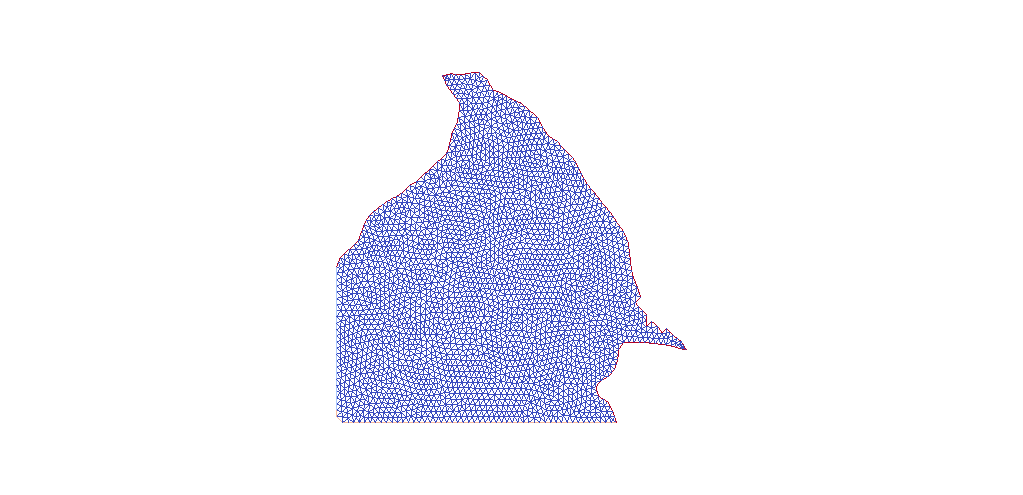}
    \end{subfigure}
    \hspace{-45mm}
    \begin{subfigure}[b]{0.6\textwidth}
      \includegraphics[width=1.1\textwidth]{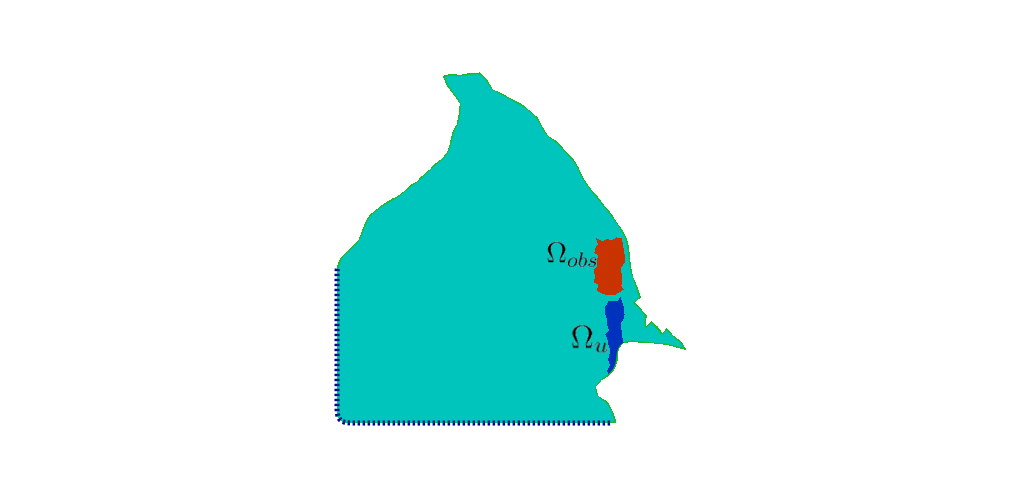}
    \end{subfigure}
  }
  \caption{\footnotesize{\textit{left}: geographical area of the Gulf of Trieste, \textit{center}: physical domain $\Omega$ with triangulation, \textit{right}: physical domain with subdomains $\Omega_u$ and $\Omega_{obs}$, and sub boundaries $\Gamma_N$ (dotted blue) and $\Gamma_D$ (solid green).}}
  \label{fig:golfo_domain}
\end{figure}

Let us denote the state variable $y$ as the pollutant concentration, while the desired concentration $z_d=0.2\chi_{\Omega_{obs}}\in L^2(\Omega)$ represents the maximal safe concentration of pollutant on $\Omega_{obs}$, where $\chi_{\Omega_{obs}}$ is the indicator of $\Omega_{obs}$. The natural state space is $Y:=H^1_{\Gamma_D}(\Omega)=\left\{ v\in H^1(\Omega):\ v=0\text{ on }\Gamma_D \right\}$. For $u\in U:=\R$, consider $u_0=u\chi_{\Omega_u}\in L^2(\Omega)$. The function $u_0$ models a source on $\Omega_u$ originating from the spill. 
We are interested in finding the value of $u$ for which the pollutant concentration in $\Omega_{obs}$ equals the maximal safe concentration $z_d$, or is as close to $z_d$ as possible in an $L^2(\Omega)$ sense.
\newline
\newline
The problem above can be phrased as the following OCP, where the governing advection-diffusion state equation is written directly in its weak form:

\begin{align*}
  &\text{minimize } J(y,u) = \frac{1}{2}\int_{\Omega_{obs}} |y-z_d|^2 \d{x} + \frac{\alpha}{2}\int_{\Omega_u}|u|^2\d{x}\text{ over all }(y,u)\in Y\times U,\\
  &\text{such that } A(\mu)y+B(\mu)u = 0,\text{ where}\\
  & \langle A(\mu)y,\tilde{p}\rangle_{P^*P} = \int_\Omega\mu_1\grad{y}\cdot\grad{\tilde{p}}\d{x} + \int_{\Omega} \left([\mu_2,\mu_3]\cdot\grad{y}\right)\tilde{p}\d{x},\\
  & \langle B(\mu) u,\tilde{p}\rangle_{P^*P} = -L_0u\int_{\Omega_u}\tilde{p}\d{x},\\
  & \text{and } P=Y.
\end{align*}

We take $\m=[\frac{1}{2},1]\times[-1,1]\times[-1,1].$ The parameter $\mu$ describes specific properties of the sea: $\mu_1$ is a diffusivity parameter while $[\mu_2,\mu_3]$ models a constant advective field.
The constant $L_0=1000$ is used to put the state equation in nondimensional form. We are mostly interested in minimizing the first term in the objective functional. On the other hand, to ensure uniqueness of the optimal control, we do prescribe a small positive value for $\alpha$, namely $\alpha=10^{-7}$. When $A(\mu)$ is coercive for every $\mu$, it is in particular invertible for every $\mu\in\m$ so that the OCP is well-posed for every $\mu$ by Corollary \ref{cor:general_linear_quadratic}, and to find the solution we must solve (\ref{eqn:lin_qdr_opt}).
Notice that in the formulation of Definition \ref{def:linear_quadratic}, the observation space $Z$ is taken to be $L^2(\Omega)$, $M$ and $\Q/\alpha$ are the Riesz map on $L^2(\Omega)$, $C$ is the injection $Y\hookrightarrow Z$ and $g=0$. 

\paragraph{Coercivity of $A_\nr(\mu)$} Coercivity of $A_\nr(\mu)$ holds if the Poincar\'e and Trace constants $C_p$ and $C_t$, given in the inequalities
\begin{align}
\label{eqn:poincare}
  C_p &=  \inf\left\{ c>0:\ \int_{\Omega}^{}|u|^2\d{x}\leq c\int_{\Omega}^{}|\grad{u}|^2\d{x}\hspace{5mm} \forall u\in H^1_{\Gamma_D}(\Omega) \right\},\\
  C_t &=  \inf\left\{ c>0:\ \int_{\Gamma_N}^{}|u|^2\d{x} \leq c\int_{\Omega}^{}|u|^2+|\grad{u}|^2\d{x}\hspace{5mm} \forall u\in H^1_{\Gamma_D}(\Omega) \right\},  \nonumber
\end{align}
are small enough: $(C_p+1)C_t < \frac{1}{2}\sqrt{2}$. In order to see this, notice that $\Gamma_N$ consists of a western vertical part $W$, a southern horizontal part $S$, and a small diagonal part $SW$ angled at $45$ degrees that joins $W$ and $S$. With a convective field $[\mu_2,\mu_3]$ and outer normal $n$ at the boundary, we have on $\Gamma_N$ that
\[
  [\mu_2,\mu_3]\cdot n = \mu_2\chi_{W}+\mu_3\chi_{S}+\frac{1}{2}\sqrt{2}(\mu_2+\mu_3)\chi_{SW}\geq -\chi_{W}-\chi_{E}-\sqrt{2}\chi_{SW}\geq-\sqrt{2}\chi_{\Gamma_N},
\]
Writing $y\grad{y} = \frac{1}{2}\grad{y}^2$ and using Green's Theorem and the Trace and Poincar\'e inequalities, we obtain for arbitrary $(\mu_1,\mu_2,\mu_3)\in\m$ and $y\in H^1_{\Gamma_D}(\Omega)$
\begin{align*}
  \langle A(\mu)y,y\rangle_{Y^*Y} &= \mu_1 \int_{\Omega}^{}|\grad{y}|^2\d{x} + \frac{1}{2}\int_{\Gamma_N}^{}([\mu_2,\mu_3]\cdot n)y^2\d{x},\\
  &\geq \frac{1}{2}\int_{\Omega}^{}|\grad{y}|^2\d{x} - \frac{1}{2}\sqrt{2}\int_{\Gamma_N}^{}|y|^2\d{x}\\
  &\geq \frac{1}{2}\int_{\Omega}^{}|\grad{y}|^2\d{x} - \frac{1}{2}\sqrt{2}C_t\int_{\Omega}^{}(|y|^2+|\grad{y}|^2)\d{x}\\
  &\geq \frac{1}{2}\int_{\Omega}^{}|\grad{y}|^2\d{x}\left( 1-\sqrt{2}C_t-\sqrt{2}C_tC_p \right).
\end{align*}
Since by again the Poincar\'e inequality (\ref{eqn:poincare}) the $H^1(\Omega)$-seminorm $y\mapsto \int_{\Omega}^{}|\grad{y}|^2\d{x}$ is equivalent to the $H^1(\Omega)$-norm on $H^1_{\Gamma_D}(\Omega)$, we see that coercivity is ensured if $(C_p+1)C_t < \frac{1}{2}\sqrt{2}$.

\paragraph{Truth Approximation}
The high fidelity spaces are based on the Finite Elements
\begin{align}
  X^k_h=\left\{ v\in C^0(\Omega):\ v_{|K}\in P_k(K)\ \forall K\in\T_h \right\} \subset H^1(\Omega),
  \label{eqn:finite_elements}
\end{align}
where $P_k(K)$ is the space of polynomials on the element $K$ of degree at most $k$.
More precisely, as the high fidelity state, control, and adjoint spaces $Y^\nd,U^\nd,P^\nd$ we take
\[
  Y^{\nd_Y}=X^1_h\cap Y,\hspace{5mm} U^{\nd_U} = \R,\hspace{5mm} P^{\nd_P} = X^1_h\cap Y.
\]
The trace and Poincar\'e inequalities remain valid on $Y^\nd$, with the (smaller) corresponding constants $C^\nd_t$ and $C^\nd_p$. By solving an eigenvalue problem we computed these constants to be $C^\nd_t=0.52$ and $C^\nd_p=0.06$. Hence, coercivity of $A^\nd(\mu)$ is guaranteed for every $\mu\in\m$ since we take $Y^\nd=P^\nd$. The truth problem described in Section \ref{sec:truth_approximation} thus is well-posed by Corollary \ref{cor:general_linear_quadratic}. Furthermore, the truth solution can be shown to converge to the \gcok{solution of the problem in continuous formulation} uniformly on $\m$ (see \cite{carere20}).

\paragraph{Reduced Order Model}
In this case we need only perform a POD compression on state and adjoint, and can leave $U_\nr=U$. We do aggregate state and adjoint in this application, so that $A_\nr(\mu)$ inherits the coercivity of $A(\mu)$ for every $\mu$. If $\dim{Y_\nr}=\dim{P_\nr}=\nr$, then it remains to solve a system of size $(4\nr +1)\times(4\nr+1)$, which now is well-posed due to the inherited coercivity for $A_\nr(\mu)$ for each $\mu$. Because $\Q,M,B,g$ and $z_d$ are parameter independent and $A$ can be decomposed in the $Q_A=3$ terms
\begin{alignat*}{2}
  &\Lambda^1_A(\mu) = \mu_1,\hspace{10mm} && \langle A_1y,\tilde{y}\rangle_{Y^*Y} = \int_\Omega \grad{y}\cdot\grad{\tilde{y}}\d{x},\\
  &\Lambda^2_A(\mu) = \mu_2, && \langle A_2y,\tilde{y}\rangle_{Y^*Y}=\int_{\Omega}^{}\frac{\partial y}{\partial x_1}\tilde{y}\d{x},\\
  &\Lambda^3_A(\mu) = \mu_3, && \langle A_3y,\tilde{y}\rangle_{Y^*Y}=\int_{\Omega}^{}\frac{\partial y}{\partial x_2}\tilde{y}\d{x}\reviewerB{.}
\end{alignat*}
Assumption \ref{ass:affine} holds, hence this system can be solved in a computation count independent of $\nd$.

\paragraph{Training set generation}
As mentioned in Section \ref{sec:rom}, the weighted POD is based on the choice of a quadrature rule with nodes $\m_d=\{\mu_1,\ldots\mu_M\}$ and weights $\{w_1,\ldots,w_M\}$. Writing $\bm{\mu} = (\bm{\mu}_1,\bm{\mu}_2,\bm{\mu}_3)$, we shall only consider product measures $\P^{\bm{\mu}}=\P^{\bm{\mu}_1}\times \P^{\bm{\mu}_2}\times\P^{\bm{\mu}_3}$ for which each $\P^{\bm{\mu}_i}$ admits a Lebesgue density, and we will assess the performance of the following quadrature rules (see e.g. \cite{sulli15}):
\begin{itemize}
  \item a Monte Carlo sampler, which samples the nodes from $\P^{\bm{\mu}}$. Its weights are all equal to $\frac{1}{M}$.
  \item The tensor product of three Gaussian quadrature rules. In this case the weights are not all equal.
  \item A Pseudo-Random sampler, that provides a rule for Un$([0,1]\times [0,1]\times [0,1])$, the Uniform distribution on $[0,1]\times [0,1]\times [0,1]$. To get a rule for $\P^{\bm{\mu}}$ on $\m$, we use the method of inversion for each $\P^{\bm{\mu}_i}$.
  \item A tensor product of Clenshaw-Curtis quadrature rules, which provide\footnote{The Clenshaw-Curtis quadrature rule can also be implemented to provide a rule for different densities.} a rule for Un$([-1,1])$. To get a rule for $\P^{\bm{\mu}}$, we use a change of variables for each $i$.
\end{itemize}
\paragraph{Results}
An image of the truth solution for the state and adjoint component is shown in Figure \ref{fig:golfo_offline}. The parameter value for which this solution is obtained is $\mu=(1,-1,1)$. This solution is thus obtained with a convective field $(\mu_2, \mu_3) =(-1,1)$ which models a water flow from south-east to north-west. The value of  the optimal control is $7.4\times 10^{-1}$.
\begin{figure}[!htb]
  \makebox[\textwidth][c]{
      \hspace{-27mm}
      \begin{subfigure}[b]{0.55\textwidth}
        \includegraphics[width=1.3\textwidth]{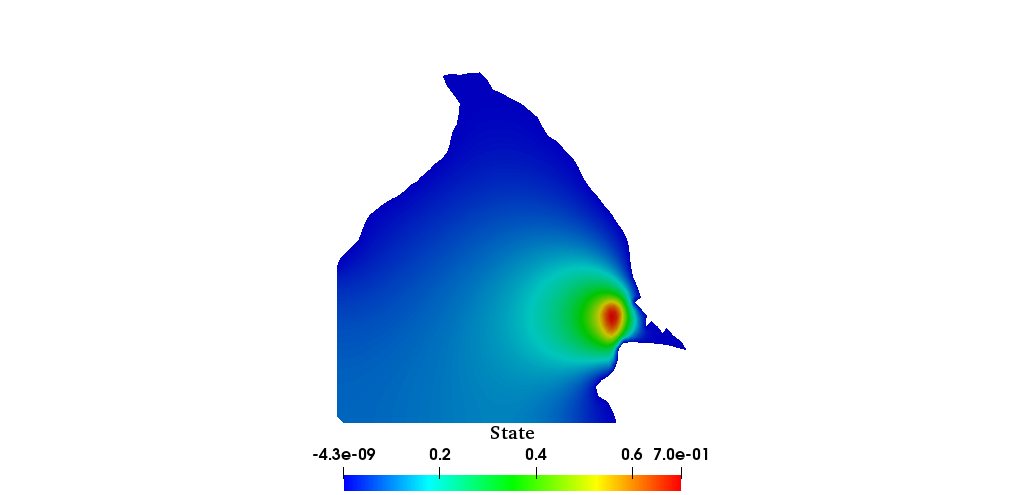}
      \end{subfigure}
      \hspace{-40mm}
      \begin{subfigure}[b]{0.55\textwidth}
        \includegraphics[width=1.3\textwidth]{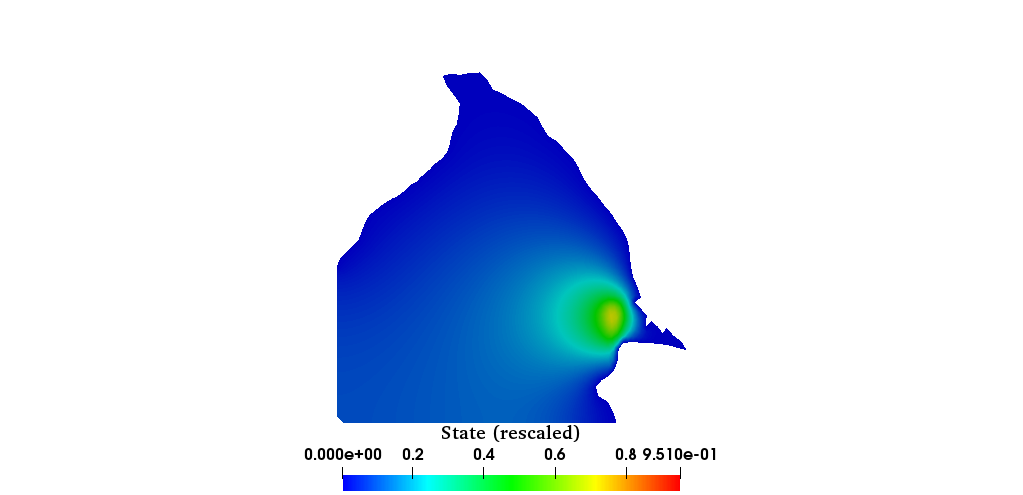}
      \end{subfigure}
      \hspace{-40mm}
      \begin{subfigure}[b]{0.55\textwidth}
        \includegraphics[width=1.3\textwidth]{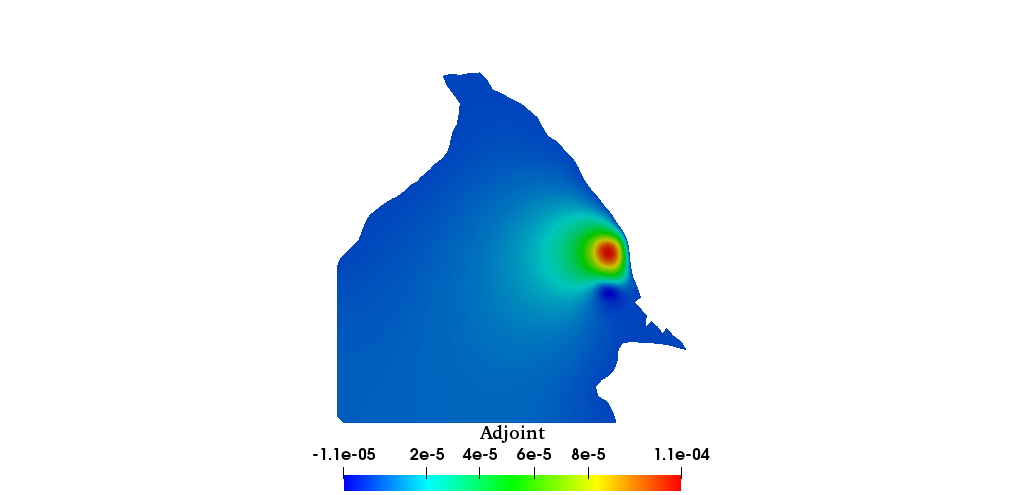}
      \end{subfigure}
  }
  \caption{\footnotesize{Gulf: truth solution for state (\textit{left}, \textit{center}) and adjoint (\textit{right}), for $\mu=(1,-1,1)$.}}
  \label{fig:golfo_offline}
\end{figure}
Notice that the truth solution for the state is strictly smaller than $z_d$ on $\Omega_{obs}$.
To enable visual comparison with the result of \cite{maria17}, the state solution is also displayed with a rescaled color range\reviewerB{.}
The dimension of the truth problem is\footnote{To be precise, we are reporting the total number of nonzero coefficients of the expansion of the truth solution components $y^\nd, p^\nd$ for $\mu=(1,-1,1)$ in the Finite Elements basis (\ref{eqn:finite_elements}).} $5345\times 5345$.
\newline
\newline
Supposing first that $\bm{\mu}$ is uniformly distributed, we build ROMs with $\nr=1,\ldots,35$, which are constructed with a Monte Carlo sampling procedure to obtain a training set $\m_d$ of size 100.
For $\mu=(1,-1,1)$, a plot of the pointwise difference between the reduced state and adjoint solutions obtained for $\nr=35$ and the respective truth solutions is shown in Figure \ref{fig:golfo_diff}.
\newline
Next, we consider the relative error for state, control and adjoint solution components and the output functional, i.e.
\begin{align*}
  e_{y,\nr}(\mu)&=\frac{{\norm{y^\nd(\mu)-y_\nr(\mu)}_Y}}{\norm{y^\nd(\mu)}_Y},\\ e_{u,\nr}(\mu)&=\frac{{\norm{u^\nd(\mu)-u_\nr(\mu)}_U}}{\norm{u^\nd(\mu)}_U},\\ e_{p,\nr}(\mu)&=\frac{\norm{p^\nd(\mu)-p_\nr(\mu)}_P}{\norm{p^\nd(\mu)}_P},\\ e_{J,\nr}(\mu)&=\frac{{|J^\nd(\mu)-J_\nr(\mu)|}}{|J^\nd(\mu)|}.
\end{align*}
We plot their base-10 logarithm, averaged over values of $\mu$ in a testing set of size 100 which is sampled from $\P^{\bm{\mu}}$ \reviewerA{and is different from the training set}, in Figure \ref{fig:golfo_errors_uniform_mc}. The sample \gcok{average} gives an indication of the trend of the decay. Not all parameters follow this trend exactly. For parameter value for which the problem is inherently more difficult to solve, the error decays more slowly. The amount of variation of the logarithmic \reviewerA{relative} errors among the parameters in the training set is indicated in the plots, by including the sample standard deviation. For example, for the state this is an unbiased estimator of the true standard deviation
\[
  \sqrt{\int_{\Theta}^{}\left( \log_{10}\norm{e_{y,\nr}(\bm{\mu})}_Y-\mathbb{E}\log_{10}\norm{e_{y,\nr}(\bm{\mu})}_Y \right)^2\d\P}.
\]
The same holds true for the control and adjoint components and the output functional.

\begin{figure}[!htb]
  \makebox[\textwidth][c]{
      \hspace{-27mm}
      \begin{subfigure}[b]{0.55\textwidth}
        \includegraphics[width=1.3\textwidth]{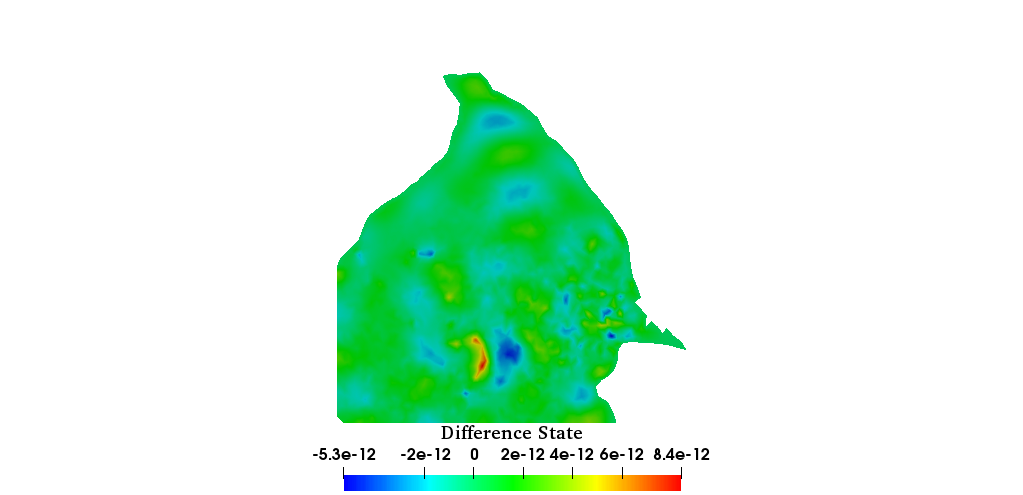}
      \end{subfigure}
      \hspace{-40mm}
      \begin{subfigure}[b]{0.55\textwidth}
        \includegraphics[width=1.3\textwidth]{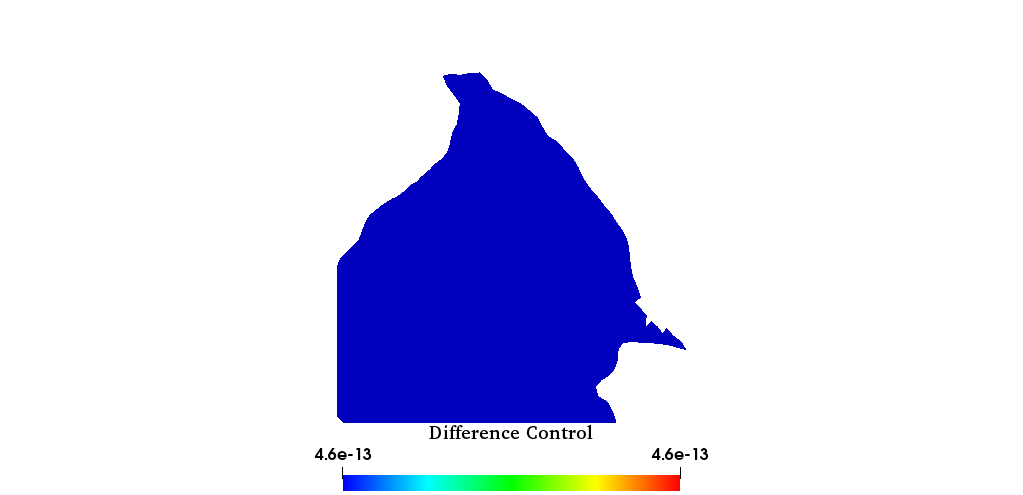}
      \end{subfigure}
      \hspace{-40mm}
      \begin{subfigure}[b]{0.55\textwidth}
        \includegraphics[width=1.3\textwidth]{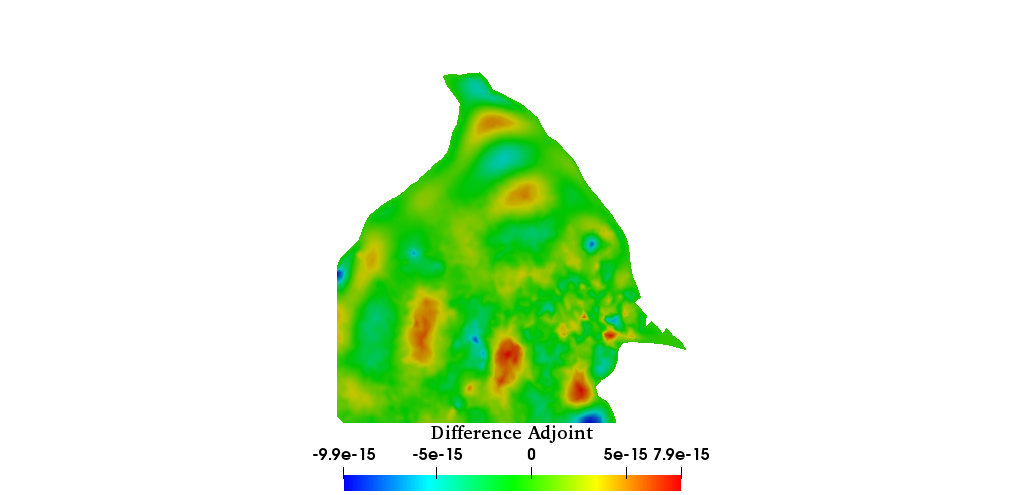}
      \end{subfigure}
  }
  \caption{\footnotesize{Gulf: ROM obtained with Monte Carlo sampling from uniform distribution: pointwise difference between reduced and truth state, control and adjoint solutions for $\mu=(1,-1,1)$ (from left to right).}}
  \label{fig:golfo_diff}
\end{figure}
\begin{figure}[!htb]
  \makebox[\textwidth][c]{
    \hspace{-2mm}
    \begin{subfigure}[b]{0.4\textwidth}
      \includegraphics[width=1.0\textwidth]{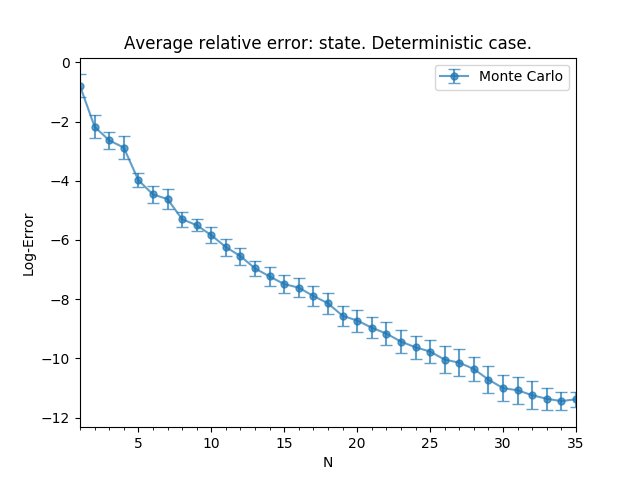}
    \end{subfigure}
    \begin{subfigure}[b]{0.4\textwidth}
      \includegraphics[width=1.0\textwidth]{relative_error_u_Uniform_mc.png}
    \end{subfigure}
  }
  \makebox[\textwidth][c]{
    \hspace{-2mm}
    \begin{subfigure}[b]{0.4\textwidth}
      \includegraphics[width=1.0\textwidth]{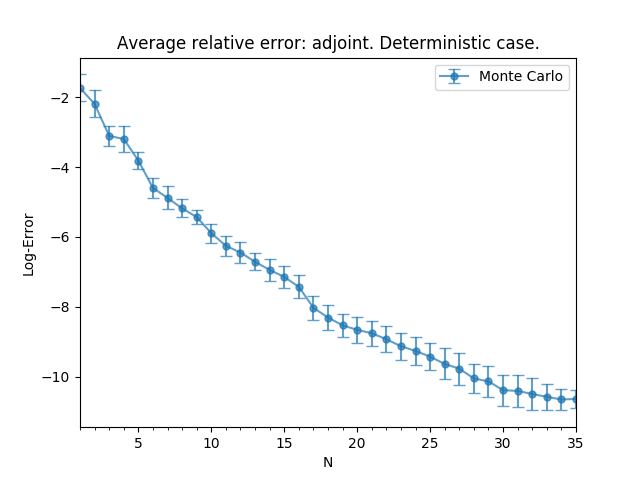}
    \end{subfigure}
    \begin{subfigure}[b]{0.4\textwidth}
      \includegraphics[width=1.0\textwidth]{relative_error_output_Uniform_mc.png}
    \end{subfigure}
  }
  \caption{\footnotesize{Gulf: average logarithmic relative errors for state (\textit{top left}), control (\textit{top right}), adjoint (\textit{bottom left}) and output (\textit{bottom right}) \reviewerA{in the deterministic case}, as a function of $\nr$. \reviewerA{ROM obtained via Monte Carlo sampling.}}}
  \label{fig:golfo_errors_uniform_mc}
\end{figure}
The pointwise errors for state, control and adjoint are very small and the relative normed errors decay exponentially. The ROM corresponding to $\nr=35$ reaches a high accuracy of order $10^{-11}$ for the state variable\footnote{After $\nr=35$, the error typically increases again due to numerical error, originating from the normalization procedure after the weighted POD, which is large for eigenvectors corresponding to the very small eigenvalues. We shall only plot the phase of decay.}.
\newline
The different quadrature rules specified above are implemented as well. In Figure \ref{fig:golfo_errors_uniform_rules} (\textit{left}) the state error originating from ROMs that are constructed with Clenshaw-Curtis and Gaussian quadrature rules as specified above, are compared with the Monte Carlo sampler. A comparison is also made with a Pseudo Random sampler. The size of the training sets for the Monte Carlo and Pseudo Random samplers is 100. \reviewerB{For each} of the three parameters, a Gaussian and Clenshaw-Curtis quadrature of five nodes is taken, leading to tensor product rules with a training set size of 125. All four quadrature rules perform similarly.
Finally, $\bm{\mu}$ is given a Beta(75,75) distribution for all three parameters, which puts most probability mass around the center of the parameter domain. The ROM is effectively using the additional information, as it requires only half the number of reduced functions compared to the deterministic case. This can be seen in Figure \ref{fig:golfo_errors_uniform_rules} (\textit{right}). The  ROMs, constructed with the Pseudo Random, Gauss and Monte Carlo rule, all perform similarly.
\begin{figure}[!htb]
  \makebox[\textwidth][c]{
    \hspace{2mm}
    \begin{subfigure}[b]{0.4\textwidth}
      \includegraphics[width=1.0\textwidth]{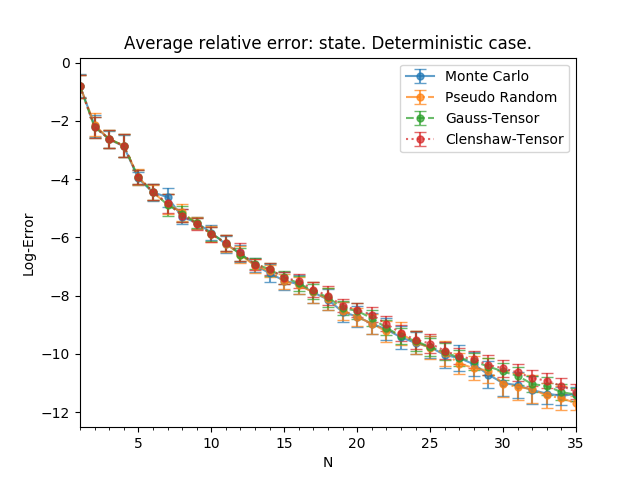}
    \end{subfigure}
    \begin{subfigure}[b]{0.4\textwidth}
      \includegraphics[width=1.0\textwidth]{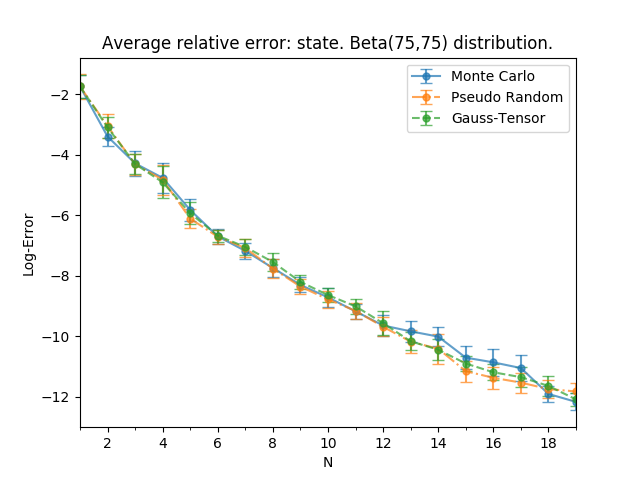}
    \end{subfigure}
  }
  \caption{\footnotesize{Gulf: average logarithmic relative errors for the state in the deterministic case (\textit{left}), and under a Beta(75,75) distribution (\textit{right}), as a function of $\nr$. Various quadrature rules are compared.}}
  \label{fig:golfo_errors_uniform_rules}
\end{figure}
  \begin{table}[!htb]
  \centering
  \begin{tabular}{|r|r|r|r|c|}
    \hline
    $\nr$ & \gcok{average} & min & max & deviation \\
    \hline
    2 & 52.8 & 26.8 & 81.5 & 6.3 \\
    7 & 51.4 & 23.7 & 72.1 & 6.7 \\
    14 & 46.1 & 18.6 & 69.7 & 7.4\\
    21 & 42.6 & 19.1 & 63.3 & 5.8 \\
    28 & 35.9 & 19.2 & 50.5 & 5.1 \\
    35 & 31.2 & 18.5 & 45.7 & 4.1 \\
    \hline
  \end{tabular}
  \caption{\footnotesize{Gulf: sample \gcok{average}, minimal value, maximal value, and sample standard deviation of speedup-index obtained with a testing set of size 100. Deterministic case. ROM obtained by Monte Carlo sampling.}}
  \label{tab:golfo_speedup}
\end{table}
Having thus concluded that the proposed ROM is able to make use of a low dimensional solution space which remains very accurate, we \gcok{further comment on} efficiency. To verify it, for each $\mu$ in the testing set we compute the so-called \textit{speedup-index}:\[\text{speedup-index} =\frac{\text{computation time of truth solution}}{\text{computation time of reduced \gcok{basis} approximation}}.\] Table \ref{tab:golfo_speedup} lists\footnote{The speedup-index is machine dependent. The results are obtained with 6GB of RAM and a 2.60 GHz i5-3230M CPU.}, for a few values of $\nr$, the sample \gcok{average} and sample standard deviation of the computed speedup-indices over the testing set of size 100, in the deterministic case. The minimal and maximal speedup-index is also displayed. For $\nr=35$, the average computational saving is around a factor 30, which comes down to the difference between one month and one day.

\subsection{Linearized quasi-geostrophic equation on the Atlantic Ocean}
\label{subsec:linear_atlantic}
\graphicspath{{images/geostrophic/}}
Let $\Omega\subset\R^2$ be open, bounded and with Lipschitz boundary. The scalar solution fields $\vv,\rho$ on $\Omega$ are said to satisfy the \gcok{steady} one-layer quasi-geostrophic equation in streamline-vorticity formulation if they solve, see \cite{crisciani2013}, the equations
\begin{equation*}
  \begin{cases}
    \begin{aligned}
      \rho &= \lap{\vv} &\text{ in }\Omega,\\
      \mu_3\F(\vv,\rho)+ \frac{\partial\vv}{\partial x_1} + \mu_1\rho - \mu_2\lap{\rho} &= u&\text{ in }\Omega,\\
      \vv &= 0&\text{ on }\partial\Omega,\\
      \rho &= 0&\text{ on }\partial\Omega\reviewerB{,}
    \end{aligned}
  \end{cases}
\end{equation*}
where $\mu=(\mu_1,\mu_2,\mu_3)\in \R_+\times\R_+\times\R_{\geq 0}$ are physical parameters representing fluid properties, $u$ represents wind stress and $\F$ describes a nonlinearity given by 
\[
  \F(\vv,\rho) = \frac{\partial \vv}{\partial x_1}\frac{\partial \rho}{\partial x_2} - \frac{\partial\vv}{\partial x_2}\frac{\partial \rho}{\partial x_1},
\]
for all $\vv,\rho$ in a suitable function space specified later. In practice, the parameters $\mu_1$ and $\mu_3$ are smaller than unity. 
If $\bm{v}$ denotes the velocity field of a fluid, then $\bm{v}$ can be recovered from $\vv$ as $\bm{v}=(-\frac{\partial\vv}{\partial x_2},\frac{\partial\vv}{\partial x_1})$. As parameter space we take $\m=[10^{-4},1]\times[0.07^3,1]\times[10^{-4},0.045^{2}]$.
\newline
\newline
We shall work with the corresponding weak form:
\begin{align*}
  \int_{\Omega}^{}\rho\tilde{\rho}\d{x} + \int_{\Omega}^{}\grad{\vv}\reviewerB{\cdot}\grad{\tilde{\rho}}\d{x} &=  0,\\
  \mu_3\int_{\Omega}^{}\F(\vv,\rho)\tilde{\vv}\d{x}+\int_{\Omega}^{}\frac{\partial \vv}{\partial x_1}\tilde{\vv}\d{x} + \mu_1\int_{\Omega}^{}\rho\tilde{\vv}\d{x} +\mu_2 \int_{\Omega}^{}\grad{\rho}\reviewerB{\cdot}\grad{\tilde{\vv}}\d{x} &=  0,
\end{align*}
where $\vv,\rho,\tilde{\vv},\tilde{\rho}\in H^1_0(\Omega)$. Defining $Y=H^1_0(\Omega)\times H^1_0(\Omega)$, $P=Y$ and $U=L^2(\Omega)$, this can be written as
\begin{align} 
\label{eqn:geostrophic_state}
  A(\mu)y+B(\mu)u = 0,
\end{align}
where, for all $\mu\in\m$, $A(\mu):Y\rightarrow P^*$ and $B(\mu)\in \B(U,P^*)$ are given by
\begin{align*}
  \langle A(\mu) (\vv,\rho),(\tilde{\vv},\tilde{\rho})\rangle_{P^*P} &= \langle A_0(\mu)(\vv,\rho),(\tilde{\vv},\tilde{\rho})\rangle_{P^*P} + \mu_3\int_{\Omega}^{}\F(\vv,\rho)\tilde{\vv}\d{x}, \\
  \langle A_0(\mu) (\vv,\rho),(\tilde{\vv},\tilde{\rho})\rangle_{P^*P} &= \int_\Omega \frac{\partial \vv}{\partial x_1}\tilde{\vv}\d{x} + \mu_2\int_{\Omega}^{}\grad{\rho}\reviewerB{\cdot}\grad{\tilde{\vv}}\d{x} + \mu_1\int_{\Omega}^{}\rho\tilde{\vv}\d{x} \\&+ \int_{\Omega}^{}\rho\tilde{\rho}\d{x} + \int_{\Omega}^{}\grad{\vv}\reviewerB{\cdot}\grad{\tilde{\rho}}\d{x},\\
  \langle B(\mu)u,(\tilde{\vv},\tilde{\rho})\rangle_{P^*P} &= -\int_\Omega u\tilde{\vv}\d{x}.
\end{align*}
Using the Cauchy-Schwarz inequality it is easy to see that $B(\mu)$ is bounded with $\norm{B(\mu)}_{\B(U,P^*)}\leq 1$, and that $A_0(\mu)$ is linear and \gcok{also} bounded with $\norm{A_0(\mu)}_{\B(Y,P^*)} \leq (3+|\mu_1|+|\mu_2|)$.
\begin{remark}
  For the state equation to be almost surely well-posed it is required that $A(\mu)$ is invertible for $\P^{\bm{\mu}}$-almost every $\mu\in\m$. It is shown in \cite{barcilon88} that $A(\mu)$ is invertible if $\mu_1$ is small enough in comparison to $\mu_2$ and if one instead takes $Y=P=\left( H^2(\Omega)\cap H^1_0(\Omega) \right)\times \left(H^2(\Omega)\cap H^1_0(\Omega)\right)$. Furthermore, \cite{barcilon88} also requires $u\in L^\infty(\Omega)$, leading to a nonreflexive control space. Due to the numerical success obtained with (\ref{eqn:geostrophic_state}) (and spaces defined therein) in \cite{maria17}, we shall continue to use (\ref{eqn:geostrophic_state}) in this work. Stabilization procedures could also be used, such as in \cite{kim15}.
\end{remark}

Before considering an OCP governed by the nonlinear quasi-geostrophic equation, we consider a linearized version by taking $\mu_3=0$. Therefore, $A(\mu)=A_0(\mu)\in\B(Y,P^*)$. This version of the quasi-geostrophic equation is also known as the linear Stommel-Munk model, \cite{crisciani2013, kim15}.

\paragraph{Problem formulation}
As physical domain $\Omega$ we take the one from \cite{maria17}: the part of the Atlantic Ocean between the coasts of Florida North-Africa and Southern Europe. The authors have kindly shared a scaled model of this domain, which we indicate with $\Omega$, as well as a fine mesh $\T_h$ on $\Omega$. The mesh size is denote by $h$.
In Figure \ref{fig:atlantic_domain}, the mesh is shown.
\newline
\newline
Let $\vv_d\in Z$ represent observed data, with observation space $Z=L^2(\Omega)\times L^2(\Omega)$. Given fluid properties $\mu$, we are interested in finding the action of the wind $u$ that, according to the quasi-geostrophic model, would have generated $\vv_d$, or at least a state that is close to $\vv_d$ in the $L^2(\Omega)$ sense. This can be done by solving the following minimization problem: 
\begin{align*}
  &\text{minimize }J(\vv,\rho,u) = \frac{1}{2}\int_{\Omega}|\vv-\vv_d|^2\d{x} +\frac{\alpha}{2}\int_\Omega|u|^2\d{x}\\&\text{over all }(\vv,\rho)\in Y,u\in U,\\
  &\text{such that }A(\mu)(\vv,\rho)+B(\mu)u=0.
\end{align*}
The constant $\alpha$ is given a small but nonzero value $10^{-5}$ for the problem to have at most one solution. 
\begin{figure}[!htb]
  \centering
      \includegraphics[width=0.7\textwidth]{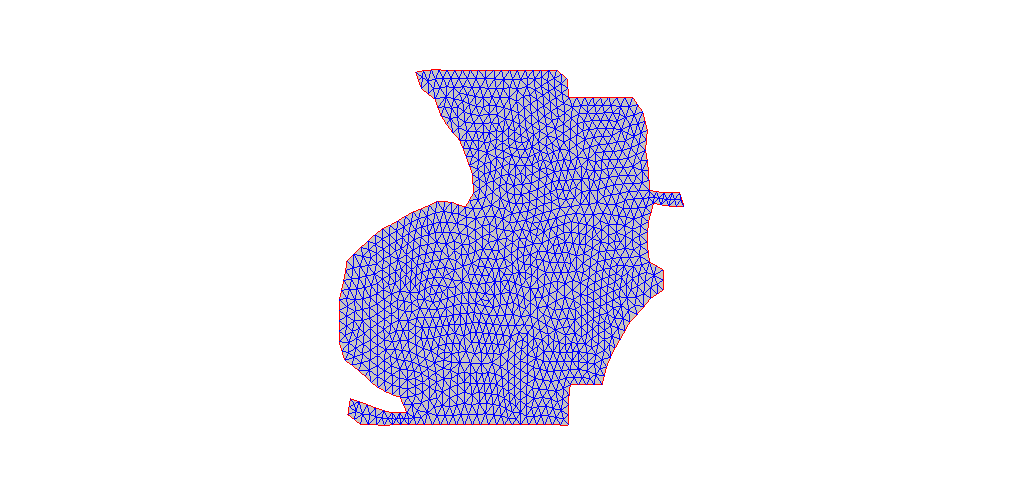}
  \caption{\footnotesize{The physical domain $\Omega$ including a triangulation on this domain.}}
  \label{fig:atlantic_domain}
\end{figure}
Notice that in the formulation of Definition \ref{def:linear_quadratic}, $C\in\B(Y,Z)$ is the injection operator. Furthermore, $M(\mu)\in\B(Z,Z^*)$ and $\Q(\mu)\in\B(U,U^*)$ are given by
\begin{align*}
\langle M(\mu)z,\tilde{z}\rangle_{Z^*Z} &= \int_\Omega z_1\tilde{z}_1\d{x} & z=(z_1,z_2),\tilde{z}=(\tilde{z}_1,\tilde{z}_2)\in Z,\\
  \langle \Q(\mu)u,\tilde{u}\rangle_{U^*U} &=  \alpha\int_\Omega u\tilde{u}\d{x} &u,\tilde{u}\in U,
\end{align*}
and $g=0$.

\paragraph{Truth approximation}
Taking the Finite Elements $X^1_h$ of (\ref{eqn:finite_elements}), we set $Y^{\nd_Y} = (X^1_h\times X^1_h)\cap Y$, $U^{\nd_U}=X^1_h$ and $P^{\nd_P}=Y^{\nd_Y}$. As is done in \cite{maria17}, we simply assume that $A^\nd(\mu)$ is invertible and thus the OCP is well-posed for $\P^{\bm{\mu}}$-a.e. $\mu\in\m$. 

\paragraph{Reduced Order Model}
We build a ROM by using a modification of the partitioned POD approach proposed in Subsection \ref{subsec:offline}. Writing $p=(w,q)$ for the adjoint solution, we perform five POD compressions, one for each of the $\vv$, $\rrho$, $u$, $w$ and $q$ components, and aggregate the spaces of state and adjoint. This leaves us with as system of size $9\nr\times9\nr$ to be solved. We then also construct ROMs by leaving out this aggregation step. In that case, we need only solve a system of size $5\nr\times 5\nr$. While we do not show that the reduced system is well-posed, notice that an efficient offline phase is ensured through the affine decomposition of Assumption \ref{ass:affine}, as $A$ is affine with $Q_A=3$ terms, and the terms $\Q,M,B,g$ and $\vv_d$ are parameter independent.

\paragraph{Results}
For a numerical implementation, the desired state $\vv_d$ is simulated by solving the quasi-geostrophic equation (\ref{eqn:geostrophic_state}) with $\mu_1=\mu_3=0$, $\mu_2=0.07^3$ and forcing term $u$ given by $(x_1,x_2)\mapsto -\sin(\pi x_2)$. Solving the truth problem for $\mu=(10^{-4}, 0.07^{3},0)$, we obtain the truth approximation $(\vv^\nd,\rho^\nd,u^\nd,w^\nd,q^\nd)$. 
The dimension of the truth problem is $5813\times 5813$.
\newline
\newline
Afterwards, ROMs are built with $\nr=1,\ldots,20$ using Monte Carlo sampling to get a training set of size 100. Performing Galerkin projection of the truth solution onto the reduced basis space corresponding to $\nr=20$, a reduced \gcok{basis} approximation $(\vv_\nr,\rho_\nr,u_\nr,w_\nr,q_\nr)$ is obtained. The desired state $\vv_d$, solution component $\vv^\nd$ and pointwise difference $\vv_\nr-\vv^\nd$ for the parameter value $\mu=(10^{-4}, 0.07^{3},0)$ are shown in Figure \ref{fig:linear_geostrophic_offline}. 
\begin{figure}[!htb]
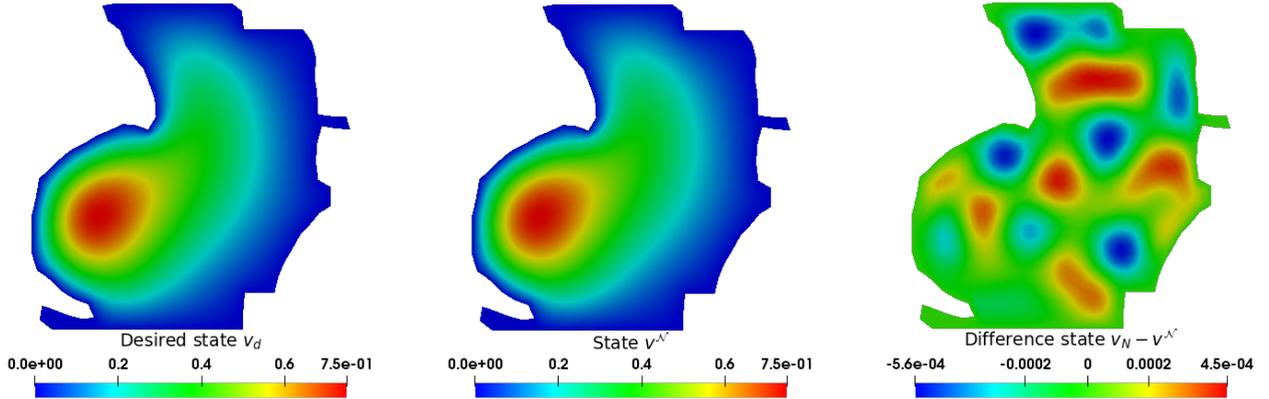

  \makebox[\textwidth][c]{
      \hspace{-27mm}
      \begin{subfigure}[b]{0.6\textwidth}
        \includegraphics[width=1.3\textwidth]{desired_state_v.png}
      \end{subfigure}
      \hspace{-40mm}
      \begin{subfigure}[b]{0.6\textwidth}
        \includegraphics[width=1.3\textwidth]{offline_state_v.png}
      \end{subfigure}
      \hspace{-40mm}
      \begin{subfigure}[b]{0.6\textwidth}
        \includegraphics[width=1.3\textwidth]{difference_state_v.png}
      \end{subfigure}
  }
  \caption{\footnotesize{Atlantic, linear case: \reviewerA{ROM obtained with Monte Carlo sampling from uniform distribution}: desired state $\vv_d$ (\textit{left}), truth state solution component $\vv^\nd$ (\textit{center}), and pointwise difference $\vv_\nr-\vv^\nd$, with $\nr=10$ (\textit{right}). The parameter value taken is $\mu=(10^{-4},0.07^{3},0)$.}}
  \label{fig:linear_geostrophic_offline}
\end{figure}

Next, we apply the ROMs constructed with the aggregation approach to compute reduced \gcok{basis} approximations for parameter values in a testing set of size 100.  A plot of the average logarithmic errors over this testing set are shown in Figure \ref{fig:geostrophic_errors_uniform_mc}.
\begin{figure}[!htb]
\begin{center}
    \begin{subfigure}[b]{0.4\textwidth}
      \includegraphics[width=1.0\textwidth]{relative_error_ypsi_Uniform_mc.png}
    \end{subfigure}
    \begin{subfigure}[b]{0.4\textwidth}
      \includegraphics[width=1.0\textwidth]{relative_error_u_Uniform_mc.png}
    \end{subfigure}

    \begin{subfigure}[b]{0.4\textwidth}
      \includegraphics[width=1.0\textwidth]{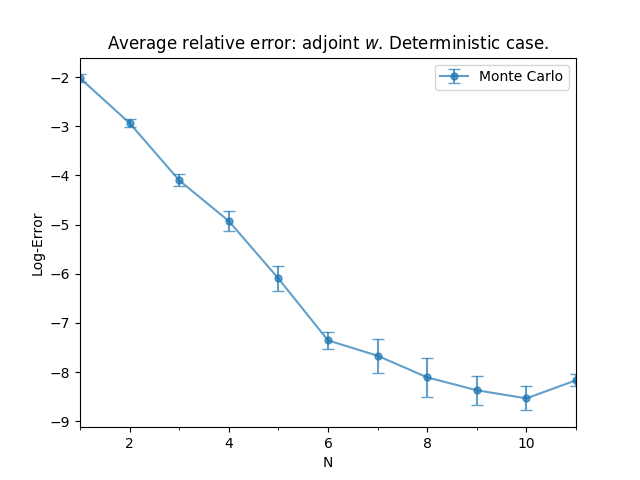}
    \end{subfigure}
    \begin{subfigure}[b]{0.4\textwidth}
      \includegraphics[width=1.0\textwidth]{relative_error_output_Uniform_mc.png}
    \end{subfigure}
    \caption{\footnotesize{Atlantic, linear case: average logarithmic relative errors for the $\vv$-component of state (\textit{top left}), control (\textit{top right}), $w$-component of adjoint (\textit{bottom left}) and output (\textit{bottom right}) \reviewerA{in the deterministic case}, as a function of $\nr$. \reviewerA{ROM obtained via Monte Carlo sampling.} }}
  \label{fig:geostrophic_errors_uniform_mc}
\end{center}
\end{figure}
The optimal number of basis functions for the control space is larger than that for the other spaces. Still, a low number of basis vectors is required to ensure small relative errors. We compare these results with ROMs obtained with different quadrature rules in Figure \ref{fig:geostrophic_errors_uniform_rules_comparison}. Each ROM is constructed twice, once with and once without the aggregation step. All four rules generate a training set of size 100, and only the error for the $\vv$-component of the state is shown. The Monte Carlo, Pseudo Random and Gaussian quadrature perform similarly, and notably much better than the Clenshaw-Curtis quadrature rule that is implemented. Furthermore, we note that the ROMs constructed by skipping the aggregation step, actually reach a higher accuracy. While they also require a larger number $\nr$ of basis functions to be retained, the fact that aggregation is skipped means that only a system of dimension $5\nr\times 5\nr$ instead of $9\nr\times 9\nr$ must be solved online. For example, the most accurate ROM constructed without the aggregation approach is obtained with the Pseudo-Random Sampler with $\nr=19$, which leads to a system to be solved online of size $95\times 95$. The optimal ROM with an aggregation approach is constructed with the Monte Carlo sampler with $\nr=11$, so that the online system is of size $99\times 99$. Furthermore, the optimal ROM constructed without the aggregation approach, is on average more accurate by about two orders of magnitude.
\newline
\newline
In Figure \ref{fig:geostrophic_errors_beta_rules_comparison} (\textit{left}) and Figure \ref{fig:geostrophic_errors_loguniform_rules_comparison} (\textit{left}) also ROMs for a Beta($75,75$), Beta($5,1$) and Loguniform distribution are considered. Each is constructed by aggregating state and adjoint. For the Beta(75,75) distribution, the Pseudo Random sampler picks out an extra useful direction in the solution manifold of the $\vv$-component. With only $\nr=5$ very high accuracy is obtained. 
For the Beta(5,1) distribution, that puts more probability mass on the larger parameter values, the Monte Carlo and Pseudo Random samplers seem to be preferable over Gaussian quadrature. For both distributions, the difference between the quadratures is small. Interestingly, the same can not be concluded in the Loguniform case, which emphasizes small parameter values, as the performance of both the Clenshaw-Curtis and the Gaussian quadrature is poor, and their accuracies vary more extensively over the testing set.
\newline
\newline
The same distributions are considered in Figures \ref{fig:geostrophic_errors_beta_rules_comparison} (\textit{right}) and \ref{fig:geostrophic_errors_loguniform_rules_comparison} (\textit{right}), but this time the corresponding ROMs are constructed skipping the aggregation step. Let us compare the optimal ROMs contructed with and without the aggregation procedure. For the Beta($75,75$) distribution, the optimal ROM with aggregation is constructed with a Pseudo Random sampler for $\nr=6$, and without aggregation with the Monte Carlo sampler for $\nr=8$. For the Beta($5,1$) distribution, these are respectively the Monte Carlo sampler with $\nr=5$ and the Monte Carlo sampler with $\nr=10$. In both cases, the accuracy of the ROM obtained without aggregation is at least as high as for the ROM obtained with aggregation.
\newline
\newline
For the Loguniform distribution, the optimal ROMs are the Monte Carlo rules with $\nr=35$ and Pseudo Random sampler with $\nr=40$, but we do note that in this case we stopped the simulations at $\nr=40$ so that higher accuracy can possibly be obtained. 
\newline
\newline
Note that the errors corresponding to the aggregation approach do decay more monotonously. Non monotonous decay can occur in general, due to the fact that the Galerkin Projection is not an orthogonal projection but a skewed projection.

\begin{figure}[!htb]
\begin{center}
  \makebox[0.4\textwidth]{\centering \textbf{With aggregation}}
  \makebox[0.4\textwidth]{\textbf{Without aggregation}}
    \begin{subfigure}[b]{0.4\textwidth}
      \includegraphics[width=1.0\textwidth]{relative_error_ypsi_Uniform_rules.png}
    \end{subfigure}
    \begin{subfigure}[b]{0.4\textwidth}
      \includegraphics[width=1.0\textwidth]{relative_error_ypsi_Uniform_rules_no_agg.png}
    \end{subfigure}

    \begin{subfigure}[b]{0.4\textwidth}
      \includegraphics[width=1.0\textwidth]{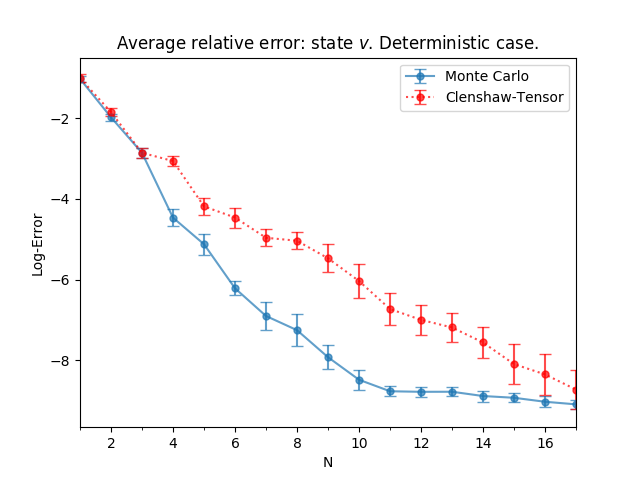}
    \end{subfigure}
    \begin{subfigure}[b]{0.4\textwidth}
      \includegraphics[width=1.0\textwidth]{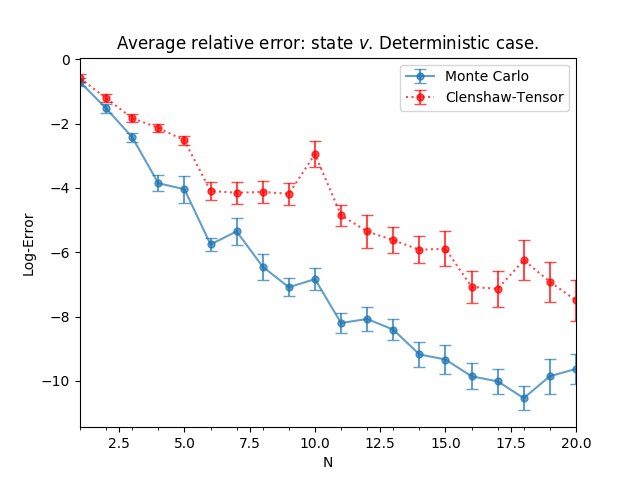}
    \end{subfigure}

    \caption{\footnotesize{Atlantic, linear case: average logarithmic relative errors in the $\vv$-component in the deterministic case, for ROMs constructed with aggregation (\textit{left}) and without aggregation (\textit{right}), as a function of $\nr$. Various quadrature rules are compared.}}
  \label{fig:geostrophic_errors_uniform_rules_comparison}
\end{center}
\end{figure}

\begin{figure}[!htb]
\begin{center}
  \makebox[0.4\textwidth]{\textbf{With aggregation}}
  \makebox[0.4\textwidth]{\textbf{Without aggregation}}

    \begin{subfigure}[b]{0.4\textwidth}
      \includegraphics[width=1.0\textwidth]{relative_error_ypsi_Beta7575_rules.png}
    \end{subfigure}
    \begin{subfigure}[b]{0.4\textwidth}
      \includegraphics[width=1.0\textwidth]{relative_error_ypsi_Beta7575_rules_no_agg.png}
    \end{subfigure}

    \begin{subfigure}[b]{0.4\textwidth}
      \includegraphics[width=1.0\textwidth]{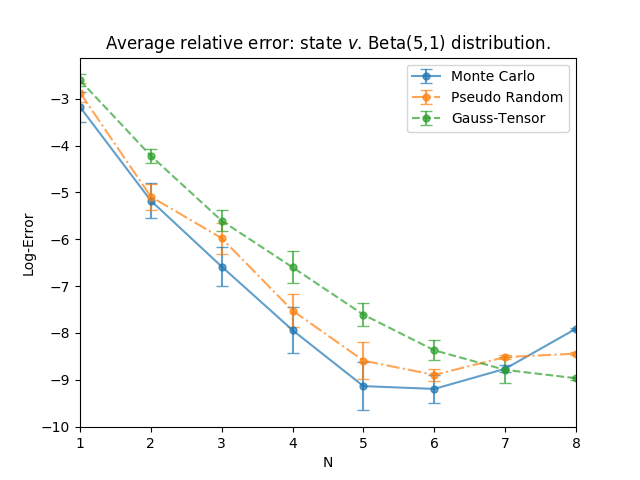}
    \end{subfigure}
    \begin{subfigure}[b]{0.4\textwidth}
      \includegraphics[width=1.0\textwidth]{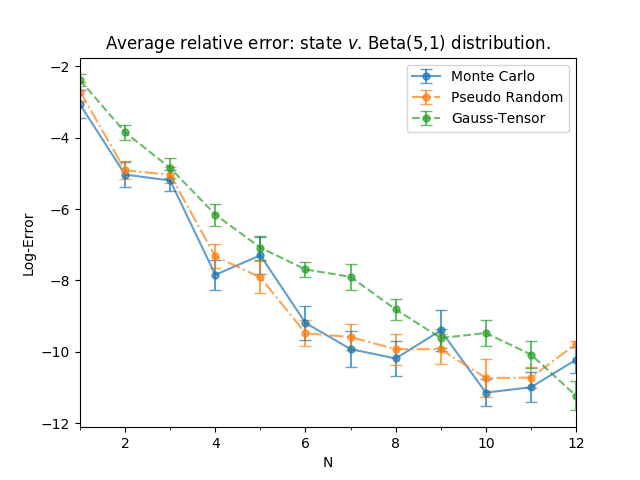}
    \end{subfigure}

  \caption{\footnotesize{Atlantic, linear case: average logarithmic relative errors in the $\vv$-component under a Beta$(75,75)$ distribution (\textit{top}) and Beta$(5,1)$ distribution (\textit{bottom}), for ROMs constructed with aggregation (\textit{left}) and without aggregation (\textit{right}), as a function of $\nr$. Various quadrature rules are compared.}}
  \label{fig:geostrophic_errors_beta_rules_comparison}
\end{center}
\end{figure}

\begin{figure}[!htb]
\begin{center}
  \makebox[0.4\textwidth]{\textbf{With aggregation}}
  \makebox[0.4\textwidth]{\textbf{Without aggregation}}

    \begin{subfigure}[b]{0.4\textwidth}
      \includegraphics[width=1.0\textwidth]{relative_error_ypsi_LogUniform_rules.png}
    \end{subfigure}
    \begin{subfigure}[b]{0.4\textwidth}
      \includegraphics[width=1.0\textwidth]{relative_error_ypsi_LogUniform_rules_no_agg.png}
    \end{subfigure}

    \begin{subfigure}[b]{0.4\textwidth}
      \includegraphics[width=1.0\textwidth]{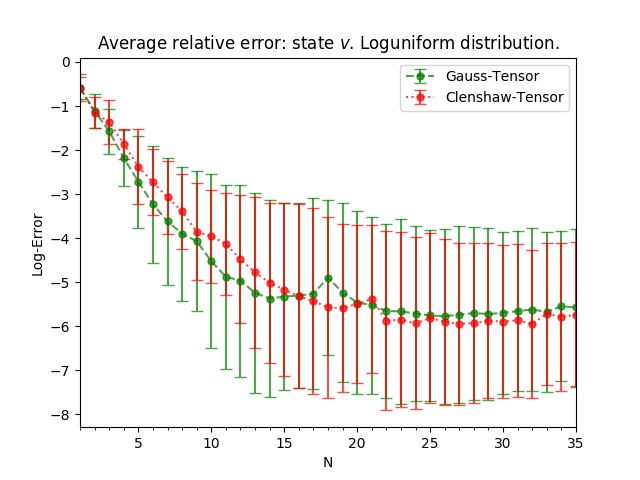}
    \end{subfigure}
    \begin{subfigure}[b]{0.4\textwidth}
      \includegraphics[width=1.0\textwidth]{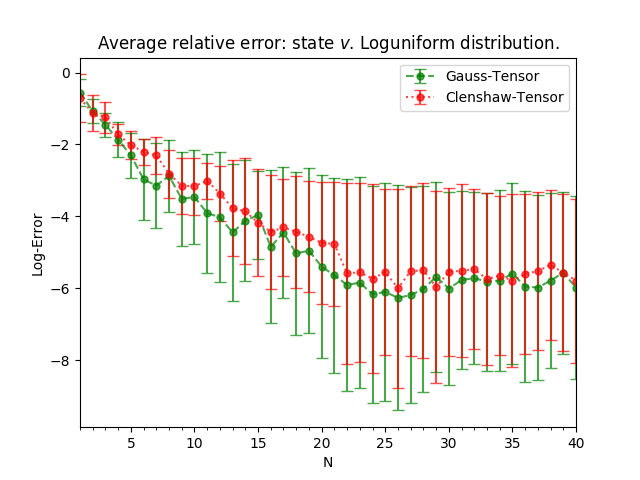}
    \end{subfigure}

    \caption{\footnotesize{Atlantic, linear case: average logarithmic relative errors in the $\vv$-component under a Loguniform distribution, for ROMs constructed with aggregation (\textit{left}) and without aggregation (\textit{right}), as a function of $\nr$. Various quadrature rules are compared.}}
  \label{fig:geostrophic_errors_loguniform_rules_comparison}
\end{center}
\end{figure}

Finally, we present the speedup-index in the deterministic case with Monte Carlo sampling in Table \ref{tab:geostrophic_speedup}. With $\nr=12$, an average speedup of over 70 times is achieved using a ROM constructed with the aggregation step. This speedup is obtained with a ROM of $\nr=20$ in case the aggregation step is skipped. Notice, however, that the deviations in the speedup index are larger for ROMs constructed without the aggregation step.

  \begin{table}[!htb]
    \makebox[0.5\textwidth]{\textbf{With aggregation}}
    \makebox[0.5\textwidth]{\textbf{Without aggregation}}
\makebox[0.5\textwidth]{
  \centering
  \begin{tabular}{|r|r|r|r|c|}
    \hline
    $\nr$ & \gcok{average} & min & max & deviation \\
    \hline
    2 & 99.6 & 55.9 & 129.9 & 9.9 \\
    4 & 95.5 & 63.6 & 123.0 & 8.9 \\
    8 & 84.9 & 61.8 & 110.7 & 7.9 \\
    12 & 72.8 & 54.7 & 94.6 & 6.3 \\
    16 & 58.3 & 41.4 & 75.4 & 5.7 \\
    20 & 45.7 & 33.0 & 58.9 & 4.3 \\
    \hline
  \end{tabular}
}
\makebox[0.5\textwidth]{
  \centering
  \begin{tabular}{|r|r|r|r|c|}
    \hline
    $\nr$ & \gcok{average} & min & max & deviation \\
    \hline
    2 & 99.8 & 51.4 & 145.9 & 17.1 \\
    4 & 98.4 & 60.8 & 144.9 & 14.8 \\
    8 & 96.0 & 57.6 & 128.4 & 14.2 \\
    12 & 83.9 & 48.2 & 125.1 & 14.3 \\
    16 & 80.5 & 48.4 & 114.5 & 12.3 \\
    20 & 72.2 & 37.6 & 103.0 & 12.1 \\
    \hline
  \end{tabular}
}
\caption{\footnotesize{Atlantic, linear case: sample \gcok{average}, minimal value, maximal value, and sample standard deviation of speedup-index obtained with a testing set of size 100. Deterministic case. ROM obtained by Monte Carlo sampling, with aggregation (\textit{left}) and without aggregation (\textit{right}).}}
  \label{tab:geostrophic_speedup}
\end{table}

\subsection{Nonlinear quasi-geostrophic equation on the Atlantic Ocean}
\graphicspath{{images/geostrophic_nonlinear/}}
Having investigated the performance of a ROM for the linearized version of the quasi-geostrophic state equation, we now relax the condition $\mu_3=0$ and thus allow the nonlinearity to enter the system. Apart from this modification, the control problem is the same as in Subsection \ref{subsec:linear_atlantic}.

\paragraph{Optimal Control for problems with a nonlinear state equation}
By Theorem \ref{thm:constrained_opt_eqns}, the OCP for $\mu$ in a set of probability one can be solved by solving the system (\ref{eqn:general_optimizing_equations}) for that $\mu$, if $(D\!A(\mu))y\in \B(Y,P^*)$ has a bounded inverse for every $y\in Y$, but we do not dwell on this assumption to hold. Writing $y=(\vv,\rrho)$ and taking $(\tilde{\vv},\tilde{\rrho})\in Y$, $p=(w,q)\in P$, we have
\begin{align*}
  \langle ((D\!A(\mu))y)(\tilde{\vv},\tilde{\rrho}),p \rangle_{P^*P} &= \langle A_0(\mu)(\tilde{\vv},\tilde{\rrho}),(w,q)\rangle_{P^*P} \\&+ \mu_3 \int_{\Omega}^{}\F(\tilde{\vv},\rrho)w\d{x} + \mu_3 \int_{\Omega}^{}\F(\vv,\tilde{\rrho})w\d{x}
\end{align*}
If we denote the adjoint variable by $p(\mu)=(w(\mu),q(\mu))$, the adjoint equation, the second equation of (\ref{eqn:general_optimizing_equations}), in this case reads (suppressing the injection $H^1_0(\Omega)\hookrightarrow L^2(\Omega)$ in the last term)
\begin{align}
  \begin{split}
    0 &= \langle A_0(\mu) (\tilde{\vv},\tilde{\rrho}),( w (\mu), q(\mu))\rangle_{P^*P} - \mu_3\int_{\Omega}^{}\F(\vv(\mu),w(\mu))\tilde{\rrho}\d{x} \\&- \mu_3\int_{\Omega}^{}\F(w(\mu),\rrho(\mu))\tilde{\vv}\d{x} + \langle M(\mu) (\vv(\mu) - \vv_d,\rrho(\mu) ),(\tilde{\vv},\tilde{\rrho}) \rangle_{Z^*Z}\reviewerB{,}
  \end{split}
    \label{eqn:geostrophic_adjoint_equation}
\end{align}
while the state equation and optimality equation are still of the same form as those in (\ref{eqn:gen_lin_qdr_opt}).

\paragraph{Truth formulation and Reduced Order Model}
The truth formulation for this nonlinear problem is obtained no differently than in the linear case. The spaces $Y^{\nd_Y},U^{\nd_U}$ and $P^{\nd_P}$ are as in the linear case. The truth formulation then is formed by solving (\ref{eqn:geostrophic_adjoint_equation}) together with the last two equations of (\ref{eqn:lin_qdr_opt}), with $Y,U,P$ replaced by $Y^{\nd_Y},U^{\nd_U},P^{\nd_P}$. Being a nonlinear system, it cannot be solved at once. An iterative procedure can be used to find a solution, and we choose to employ Newton's method to solve the truth problem. While we do not theoretically show well-posedness of the truth problem, we decide to remain in a \textit{mild nonlinear setting} by the choice of parameter space, see \cite{crisciani2013}. This has always led to a convergent Newton solver. For higher values of the nonlinear parameter $\mu_3$, one shall need to stabilize the system at hand.
\newline
A ROM is constructed using the same methods as for the linear case. Again, well-posedness is assumed and a Newton iteration procedure is used to solve the reduced system. 

\paragraph{Results}
This time the desired state $\vv_d$ is simulated by solving the quasi geostrophic equation (\ref{eqn:geostrophic_state}) with $\mu=(0,0.07^3,0.07^2)$ and forcing term $(x_1,x_2)\mapsto -\sin(\pi x_2)$. Let us first assume the deterministic setting.
The truth problem is solved for the parameter value $\mu=(10^{-4}, 0.07^{3},0.045^2)$. 
\newline
\newline
Assuming $\bm{\mu}$ is uniformly distributed, we construct a ROM for $\nr=1,\ldots,15$, with the aggregation approach. For $\nr=15$ a reduced solution component $\vv_\nr$ was computed for $\mu=(10^{-4}, 0.07^{3},0.045^2)$. The desired state $\vv_d$, truth solution component $\vv^\nd$ and pointwise difference $\vv_\nr-\vv^\nd$ for this parameter value are shown in Figure \ref{fig:nonlinear_geostrophic_offline}. Furthermore, the objective functional is approximated with an accuracy of 13 digits.
\begin{figure}[!htb]
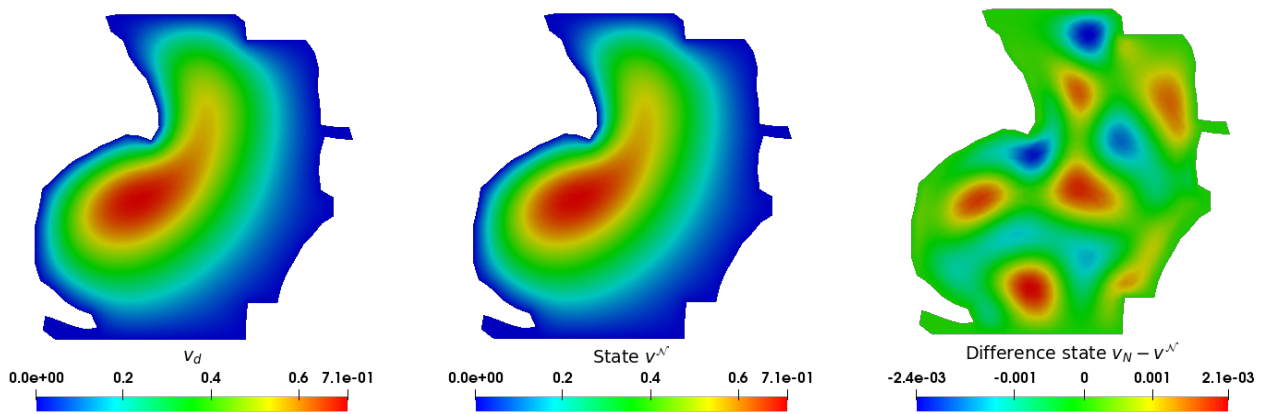

  \makebox[\textwidth][c]{
      \hspace{-27mm}
      \begin{subfigure}[b]{0.6\textwidth}
        \includegraphics[width=1.3\textwidth]{desired_state_v.png}
      \end{subfigure}
      \hspace{-40mm}
      \begin{subfigure}[b]{0.6\textwidth}
        \includegraphics[width=1.3\textwidth]{offline_state_v.png}
      \end{subfigure}
      \hspace{-40mm}
      \begin{subfigure}[b]{0.6\textwidth}
        \includegraphics[width=1.3\textwidth]{difference_state_v.png}
      \end{subfigure}
  }
  \caption{\footnotesize{Atlantic, nonlinear case: \reviewerA{ROM obtained with Monte Carlo sampling from uniform distribution} desired state $\vv_d$ (\textit{left}), truth solution component $\vv^\nd$ (\textit{center}), and pointwise difference $\vv_\nr-\vv^\nd$ (\textit{right}), with $\nr=10$. The parameter value taken is $\mu=(10^{-4},0.07^{3},0.045^2)$.}}
  \label{fig:nonlinear_geostrophic_offline}
\end{figure}
The average logarithmic error of the output and $\vv$-component are displayed in Figure \ref{fig:nonlinear_geostrophic_errors_comparison} (\textit{top left}). Once more exponential decay can be observed. In Figure \ref{fig:nonlinear_geostrophic_errors_comparison} (\textit{center left, bottom left}) a Beta(75,75) distribution and a Loguniform distribution on each of the three parameter domains is considered. In each case, different quadrature rules are compared, and ROMs are constructed with the aggregation approach. The Beta(75,75) distribution again reduces the complexity of the model, which results in needing only $\nr=5$ to reconstruct the discrete solution manifold well. The Loguniform distribution achieves similar accuracy after $\nr=25$. It should be noted that the Monte Carlo and Pseudo Random rules taken use a training set of size 100, while the Gaussian and Clenshaw-Curtis rules use a training set of size 125.
\newline
\newline
Despite working in the mild nonlinear setting, for some parameter value the Newton Iteration procedure has diverged for the ROM with $\nr=24$ and a Gauss sampler. This results in the large deviation that is observed.

\begin{figure}[!htb]
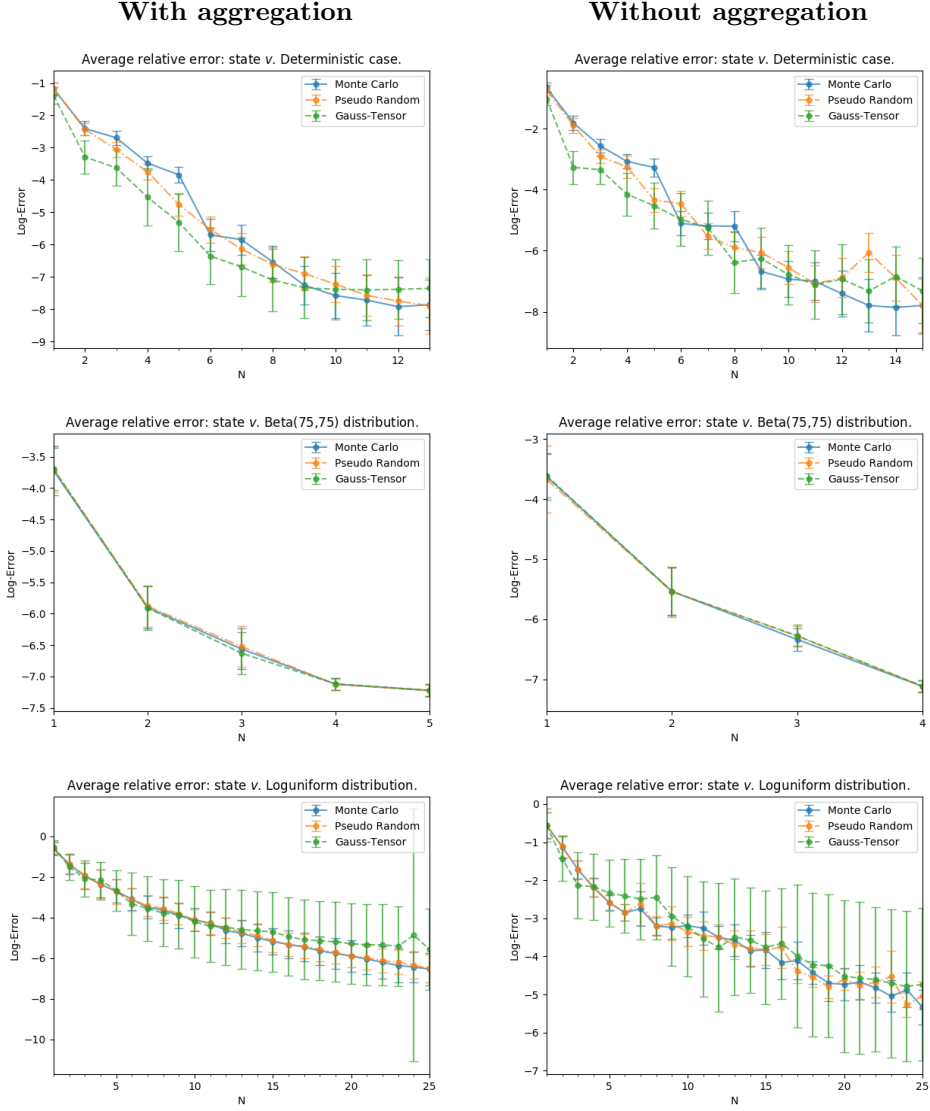

\begin{center}
  \makebox[0.4\textwidth]{\textbf{With aggregation}}
  \makebox[0.4\textwidth]{\textbf{Without aggregation}}

    \begin{subfigure}[b]{0.4\textwidth}
      \includegraphics[width=1.0\textwidth]{relative_error_ypsi_Uniform_rules.png}
    \end{subfigure}
    \begin{subfigure}[b]{0.4\textwidth}
      \includegraphics[width=1.0\textwidth]{relative_error_ypsi_Uniform_rules_no_agg.png}
    \end{subfigure}

    \begin{subfigure}[b]{0.4\textwidth}
      \includegraphics[width=1.0\textwidth]{relative_error_ypsi_Beta7575_rules.png}
    \end{subfigure}
    \begin{subfigure}[b]{0.4\textwidth}
      \includegraphics[width=1.0\textwidth]{relative_error_ypsi_Beta7575_rules_no_agg.png}
    \end{subfigure}

    \begin{subfigure}[b]{0.4\textwidth}
      \includegraphics[width=1.0\textwidth]{relative_error_ypsi_LogUniform_rules.png}
    \end{subfigure}
    \begin{subfigure}[b]{0.4\textwidth}
      \includegraphics[width=1.0\textwidth]{relative_error_ypsi_LogUniform_rules_no_agg.png}
    \end{subfigure}
    \caption{\footnotesize{Atlantic, nonlinear case: average logarithmic relative errors for the state $\vv$-component in the deterministic case (\textit{top}), under a Beta($75,75$) distribution (\textit{center}) and under a Loguniform distribution (\textit{bottom}), for ROMs constructed with aggregation (\textit{left}) and without aggregation (\textit{right}), as a function of $\nr$. Various quadrature rules are compared.}}
  \label{fig:nonlinear_geostrophic_errors_comparison}
\end{center}
\end{figure}

The same ROMs are constructed leaving out the aggregation step, and the corresponding results are shown in Figure \ref{fig:nonlinear_geostrophic_errors_comparison} (\textit{right}). As in the example of Subsection \ref{subsec:linear_atlantic}, we can conclude that without the need of the aggregation step, the obtained results are of an accuracy that is at least as high as in an aggregation approach, while saving online computation time in general.

In Table \ref{tab:nonlinear_geostrophic_speedup} the speedup-index for the Monte Carlo Sampler in the deterministic setting is shown. The speedups are small due to the nonlinearity, as the system that needs to be solved online still needs to be assembled in a number of computations that depends on $\nd$. Furthermore, the Newton iteration used to solve this system can take many steps to converge. While the ROMs are still effective, this lack of efficiency renders the ROM less useful in this application. Nevertheless, there are strategies which can overcome this issue, the interested reader may refer to see \cite{barrault04}.

\begin{table}[H]
  \makebox[0.5\textwidth]{\textbf{With aggregation}}
  \makebox[0.5\textwidth]{\textbf{Without aggregation}}
  \makebox[0.5\textwidth]{
  \centering
  \begin{tabular}{|r|r|r|r|c|}
    \hline
    $\nr$ & \gcok{average} & min & max & deviation \\
    \hline
    2 & 1.8 & 1.3 & 2.3 & 0.20 \\
    3 & 1.7 & 1.3 & 2.1 & 0.20 \\
    6 & 1.3 & 1.0 & 1.7 & 0.18 \\
    9 & 1.0 & 0.80 & 1.4 & 0.14 \\
    12 & 0.83 & 0.60 & 1.1 & 0.12 \\
    15 & 0.66 & 0.45 & 0.86 & 0.097 \\
    \hline
  \end{tabular}
  }
  \makebox[0.5\textwidth]{
  \centering
  \begin{tabular}{|r|r|r|r|c|}
    \hline
    $\nr$ & \gcok{average} & min & max & deviation \\
    \hline
    2 & 1.8 & 1.4 & 2.9 & 0.25 \\
    3 & 1.8 & 1.1 & 3.1 & 0.27 \\
    6 & 1.6 & 1.0 & 2.6 & 0.26 \\
    9 & 1.4 & 0.9 & 2.4 & 0.25 \\
    12 & 1.3 & 0.83 & 2.0 & 0.20 \\
    15 & 1.1 & 0.75 & 1.9 & 0.18 \\
    \hline
  \end{tabular}
  }
  \caption{\footnotesize{Atlantic, nonlinear case: sample \gcok{average}, minimal value, maximal value, and sample standard deviation of speedup-index obtained with a testing set of size 100. Deterministic case. ROM obtained by Monte Carlo sampling, with aggregation (\textit{left}) and without aggregation (\textit{right}).}}
  \label{tab:nonlinear_geostrophic_speedup}
\end{table}

\section{Conclusions}
\label{sec:conclusions}
As an alternative to the Lagrangian approach applicable to linear-quadratic Optimal Control Problems with full-admissibility, we have studied OCPs which may have admissibility constraints or which are not of linear-quadratic nature by using the adjoint approach. As a by-product we have concluded that to establish well-posedness of OCPs and their approximations, the use of aggregate spaces can be avoided, because the fundamental requirement is invertibility of the reduced state operator. Having said this, in the coercive case the use of aggregate spaces is still useful, because it guarantees this invertibility.
\newline
We have also studied OCPs with random inputs, and used the weighted POD algorithm to construct ROMs for OCPs with full admissibility. The weighted POD has enabled us to incorporate information on parameter distributions, that originate from, for example, uncertainties involved in experimental measurements. The ROMs then allowed us to accurately and efficiently compute approximations to linear-quadratic OCPs with full admissibility in several marine science scenarios. We have also embedded a scenario in which the governing equation is the nonlinear single-layer \gcok{steady} quasi-geostrophic equation in an Uncertainty Quantification context.
\newline
In \gcok{our} scenarios for which the governing equation was not elliptic, we have numerically confirmed that it is not \reviewerB{needed to} use aggregate spaces. Indeed, the ROMs that were constructed without this aggregation performed at least as well, and in most cases resulted in a speedup through a smaller system that has to be solved in the online phase. Furthermore, we have considered various types of distributions on the parameter space. In each case, the constructed ROMs effectively incorporated this information. As the weighted POD algorithm depends on a quadrature rule, we have explored several implementations of quadrature rules. No numerical evidence that one specific rule should be preferred over the others has been found, although the chosen implementation of the Clenshaw-Curtis rule should be avoided. Henceforth, one might as well use the Monte Carlo sampler, as it is the simplest one.

\section*{Acknowledgements}
We acknowledge the funding granted via the European Eramus+ project and the support by European Union Funding for Research and Innovation -- Horizon 2020 Program -- in the framework of European Research Council Executive Agency: Consolidator Grant H2020 ERC CoG 2015 AROMA-CFD project 681447 ``Advanced Reduced Order Methods with Applications in Computational Fluid Dynamics''. We also acknowledge the INDAM-GNCS project ``Advanced intrusive and non-intrusive model order reduction techniques and applications'' and the PRIN 2017  ``Numerical Analysis for Full and Reduced Order Methods for the efficient and accurate solution of complex systems governed by Partial Differential Equations'' (NA-FROM-PDEs).
The computations in this work have been performed with RBniCS \cite{rbnics} library, developed at SISSA mathLab, which is an implementation in FEniCS \cite{fenics} of several reduced order modelling techniques; we acknowledge developers and contributors to both libraries.

\addcontentsline{toc}{chapter}{Bibliography}
\bibliography{mybibitems.bib}
\bibliographystyle{apalike}

\end{document}